\documentclass[12pt]{amsart}
\usepackage{amsmath,graphicx,amsthm,mathrsfs}
\usepackage[pdftex]{hyperref}
\usepackage{subfig,verbatim}
\usepackage[margin=1in,centering]{geometry}

\newtheorem{prop}{Proposition}
\newtheorem{lemma}{Lemma}
\newtheorem{theorem}{Theorem}

\theoremstyle{definition}
\newtheorem*{rem}{Remark}

\captionsetup[subfigure]{labelformat=empty}

\title{A note on Khovanov-Rozansky $sl_2$-homology and ordinary Khovanov homology}
\author{Mark C. Hughes}

\begin{document}

\maketitle

\begin{abstract}
In this note we present an explicit isomorphism between Khovanov-Rozansky $sl_2$-homology and ordinary Khovanov homology.  This result was originally stated in Khovanov and Rozansky's paper \cite{KRI}, though the details have yet to appear in the literature.  The main missing detail is providing a coherent choice of signs when identifying variables in the $sl_2$-homology.  Along with the behavior of the signs and local orientations in the $sl_2$-homology, both theories behave differently when we try to extend their definitions to virtual links, which seemed to suggest that the $sl_2$-homology may instead correspond to a different variant of Khovanov homology.  In this paper we describe both theories and prove that they are in fact isomorphic by showing that a coherent choice of signs can be made.  In doing so we emphasize the interpretation of the $sl_2$-complex as a cube of resolutions.  
\end{abstract}
 
\section{Introduction}
In \cite{KRI} Khovanov and Rozansky constructed for each $n \geq 2$ a homology theory for oriented links in $\mathbb{R}^3$, which assigns to each such link $L$ a bigraded $\mathbb{Q}$-vector space $\mathcal{H}_n(L)$.  The graded Euler characteristic of this space yields the $sl_n$-polynomial of $L$.  They claimed that when $n=2$, this theory is equivalent to the homology theory defined earlier by Khovanov in \cite{khovanov}, which similarly categorifies the $sl_2$-polynomial (Jones polynomial).  Khovanov's original homology theory can also be constructed over $\mathbb{Q}$, in which case it assigns to each oriented link $L$ a bigraded $\mathbb{Q}$-vector space $\mathcal{H}(L)$.  In both setups one of the two gradings present is homological, i.e. arising from the chain complex structures used to define $\mathcal{H}(L)$ and $\mathcal{H}_n(L)$, while the other is induced from an additional grading on the chain complexes and is referred to as the quantum grading.   For each oriented link $L$, let $\mathcal{H}^{i,j}(L)$ and $\mathcal{H}^{i,j}_n(L)$ denote the subspaces of $\mathcal{H}(L)$ and $\mathcal{H}_n(L)$ consisting of elements with homological grading $i$ and quantum grading $j$.  

Although the equivalence between $\mathcal{H}$ and $\mathcal{H}_2$ is often referred to (see e.g., \cite{KRI,Rasmussen}), the details have yet to appear in the literature.  In this note we provide these details, in proving the following:

\begin{theorem}
\label{maintheorem}
For any oriented link $L$, $\mathcal{H}^{i,j}(L)$ and $\mathcal{H}^{i,-j}_2 (L)$ are isomorphic as $\mathbb{Q}$-vector spaces for any $i,j \in \mathbb{Z}$.  
\end{theorem}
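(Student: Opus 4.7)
The plan is to construct an explicit chain-level isomorphism between the two complexes computing $\mathcal{H}(L)$ and $\mathcal{H}_2(L)$ that gives the stated grading relation on homology. Fix a diagram $D$ of $L$ with $c$ crossings. Both theories can be organized as cubes of resolutions indexed by $\{0,1\}^c$: each vertex $\alpha$ corresponds to a planar resolution $D_\alpha$ (a disjoint union of $k_\alpha$ circles), and each edge corresponds to a saddle cobordism between two such resolutions differing at one crossing. I would attack the theorem in the following steps.

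First, I would analyze the vertices. On the Khovanov side, $D_\alpha$ is assigned $A^{\otimes k_\alpha}$ where $A = \mathbb{Q}[x]/(x^2)$. On the $sl_2$ side, $D_\alpha$ is assigned a matrix factorization of a potential $W = \sum_e \pm x_e^3$ summed over marked edges. Using the reduction formulas from \cite{KRI}, each circle contributes a $2$-periodic complex whose cohomology is a rank-$2$ graded $\mathbb{Q}$-vector space, so after taking homology of the matrix factorization one recovers $A^{\otimes k_\alpha}$ up to a quantum grading shift. I would choose explicit generators $\{1, \widetilde{x}\}$ per circle, where $\widetilde{x}$ represents the class of a distinguished marked-point variable, and record the induced quantum gradings; the sign reversal $j \mapsto -j$ will fall out of comparing Khovanov's convention with the Khovanov--Rozansky convention at this stage.

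Next, I would analyze the edges. Each edge corresponds to a saddle cobordism and induces, on the Khovanov side, a chain map which (tensored with the identity on the unaffected circles) is either the Frobenius multiplication $m : A \otimes A \to A$ or comultiplication $\Delta : A \to A \otimes A$. On the $sl_2$ side, the edge maps come from the chain maps $\chi_0$ and $\chi_1$ between matrix factorizations associated to the two local resolutions of a single crossing. Passing to homology and applying the vertex identifications of Step~1, one can compute that $\chi_0$ and $\chi_1$ induce $m$ and $\Delta$ respectively, up to an overall sign and rescaling by a nonzero scalar. This is a local computation near a single edge.

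The main obstacle is the third step: assembling these local identifications into a \emph{global} chain isomorphism of cubes. The $sl_2$ cube is anticommutative by construction of the matrix factorization maps, while the Khovanov cube is rendered anticommutative through an explicit sign rule on edges (the standard ``product of coordinates'' sign). Because the vertex isomorphisms involve choices (of generators, and in particular of a sign for $\widetilde{x}$), one must make these choices \emph{coherently} so that every square of the cube commutes after the two sets of edge maps are identified. The hard part of the proof is exhibiting such a coherent choice: I would assign a sign $\varepsilon_\alpha \in \{\pm 1\}$ to each vertex $\alpha$ (determined by the coordinates of $\alpha$ together with combinatorial data of the circles in $D_\alpha$, e.g.\ a checkerboard coloring or a choice of marked arc per circle) and verify face by face that the resulting rescaled vertex isomorphisms intertwine the two cubes' edge maps. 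Once this coherence is established, passing to homology yields the desired isomorphism $\mathcal{H}^{i,j}(L) \cong \mathcal{H}_2^{i,-j}(L)$, with the grading reversal arising from the opposite sign conventions used for the quantum gradings in the two constructions.
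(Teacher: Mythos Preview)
Your overall strategy---identify the vertex spaces, compute the edge maps locally, then find coherent signs to glue everything together---is exactly the paper's approach, and you correctly isolate the global sign-coherence problem as the crux. However, your proposed fix does not have the right shape, and as stated would not work.

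A single sign $\varepsilon_\alpha \in \{\pm 1\}$ attached to each vertex is not enough structure. The local computation of the edge maps on the $sl_2$ side does not produce $m$ or $\Delta$ up to a nonzero \emph{scalar}: one obtains factors such as $y-z$ or $y+z$, where $y,z$ are the marked-point variables on the two arcs near the crossing, and the sign in this factor depends on which marked point one has chosen to represent each circle. Hence the correction must be a sign \emph{per circle} (equivalently per marked point), and these per-circle signs must be compatible across \emph{all} resolutions simultaneously---the same marked point appears in many different $D_\alpha$. The paper handles this by fixing once and for all a function $\tau\colon Z \to \{\pm 1\}$ on the set $Z$ of marked points of $D$ (not on vertices of the cube), defined by counting the parity of orientation-reversing turns along a path in $D$ from $z$ to a chosen basepoint on its planar component; a nontrivial combinatorial lemma shows this parity is path-independent. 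The vertex isomorphism then sends $x_i \mapsto \tau(z_i)\,z_i$, and generators $g_v$ of $H_2(\Gamma_v)$ are chosen inductively along the edges so that the identifications match. Your suggestions of a checkerboard coloring or a marked arc per circle point in the right direction, but the essential feature you are missing is that the sign data must be fixed \emph{globally on $D$ before resolving}, so that each marked point carries the same sign in every resolution in which it appears.

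A secondary imprecision: it is not the case that $\chi_0$ induces $m$ and $\chi_1$ induces $\Delta$. Whether a given edge is a merge or a split is determined by the global topology of the resolution (whether the two local arcs lie on the same circle or on different ones), not by which of $\chi_0$, $\chi_1$ is applied; each of $\chi_0$ and $\chi_1$ can yield either $m$ or $\Delta$.
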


Note that our sign convention for crossings will be opposite those in \cite{KRI} (see Figure \ref{fig:resolutions} below), and hence the above correspondence does not require taking the mirror image of $L$ like the statement in \cite{KRI}.

There are a few reasons why providing the details of this isomorphism may be important.  First, $\mathcal{H}_2 (L)$ is defined using polynomial rings $\mathbb{Q}[z_1,\ldots,z_m]$, where the $z_j$ correspond to marked points on a diagram of $L$.  Nearby variables can be identified with signs corresponding to local orientations (see Section \ref{sec:computingU2}).  The presence of these signs and local orientations seems to suggest a similarity with Clark, Morrison and Walker's disoriented Khovanov homology in \cite{disoriented}.  Furthermore, both the $sl_2$ and ordinary theories behave very differently when their definitions are extended to virtual links.  The definition of the $sl_2$-theory extends with no additional difficulties, while the ordinary theory requires considerable effort (see \cite{Manturov}).  On the surface these details suggest that $\mathcal{H}_2$ may instead correspond to a different variant of Khovanov homology other than the ordinary theory $\mathcal{H}$.  However, as we show below, choices can indeed be made so that the signed identification of variables do not cause a problem when constructing the isomorphism, and that the correspondence actually breaks down when we allow virtual crossings.

We begin by briefly outlining the constructions of $\mathcal{H}(L)$ and $\mathcal{H}_2 (L)$, both of which start with a diagram $D$ of $L$, from which we construct a ``cube of resolutions".  We denote the ordinary Khovanov cube of resolutions by $U$, and the $sl_2$-Khovanov-Rozansky cube of resolutions by $U_2$.  The constructions of $U$ and $U_2$ are outlined in Sections \ref{sec:ordinary} and \ref{sec:KRsl2} respectively.  Also contained in Section \ref{sec:KRsl2} are brief descriptions of both matrix factorizations and trivalent graphs, as well as some standard matrix factorization lemmas which will be needed later on.  Section \ref{sec:computingU2} contains computations which give explicit descriptions of the vector spaces and maps associated to the vertices and edges of $U_2$.  The reader who is familiar with the constructions of $\mathcal{H}(L)$ and $\mathcal{H}_2(L)$ is invited to skip directly to Section \ref{sec:identifyingcubes}, where the isomorphisms of Theorem \ref{maintheorem} are constructed.  These isomorphisms are described on the level of the cubes $U$ and $U_2$.  This description begins in Section \ref{subsec:underlyingcubes} by identifying the underlying 1-skeleta of the cubes.  In Sections \ref{subsec:signs} and \ref{subsec:generators} we then fix consistent choices of signs and generators for the spaces at the vertices of the cube $U_2$.  In Section \ref{subsec:isomorphisms} we construct the required isomorphisms between the spaces assigned to each vertex by $U$ and $U_2$, and verify that these isomorphisms commute with the edge maps.  The homological and quantum gradings of these maps are then computed in Sections \ref{subsec:homologicalgradings} and \ref{subsec:quantumgradings}. 

\subsection*{Acknowledgements} The author would like to thank Oleg Viro for many helpful discussions.

\section{Ordinary Khovanov homology} \label{sec:ordinary} 
\begin{figure}
 \centering
 \includegraphics[width=10cm]{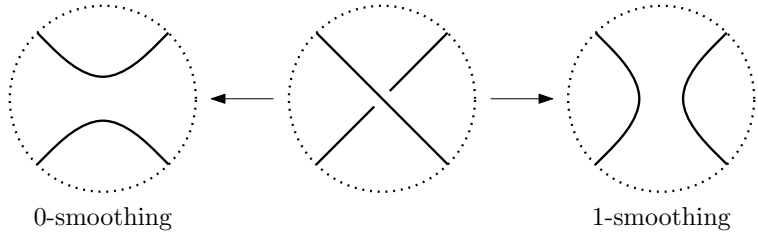}
  \caption{Two smoothings of a crossing}
  \label{fig:smoothings}
\end{figure}

Given a link diagram $D$, we can smooth each crossing in two different ways, called the $0$ and $1$-smoothings (see Figure \ref{fig:smoothings}).  Given an ordering of the $n$ crossings of $D$, to each vertex $v \in \{0,1\}^n$ of the $n$-dimensional cube we can associate a total smoothing $D_v$, where the $j$-th crossing of $D$ is smoothed according to the $j$-coordinate of $v$.  For each such $v$ the total smoothing $D_v$ will consist of a collection of disjoint circles in the plane.  We let $s(v)$ denote the number of circles making up $D_v$, and $r(v)$ the number of 1-smoothings performed to obtain $D_v$.  Let $V$ denote the graded $\mathbb{Q}$-vector space spanned by two elements $\{1,x\}$, where $\text{deg } 1 = 1$ and $\text{deg } x = -1$.  If $n_+$ and $n_-$ denote the number of positive and negative crossing of $D$ respectively, then to each $D_v$ we associate the vector space $\mathbb{V}_v=V^{\otimes s(v)}\{ r(v)+n_+-2n_-\}$ (where $W\{k\}$ denotes the vector space $W$ with grading shifted by $k$).  Fix a one-to-one correspondence between the circles in $D_v$ and the tensor product factors of $\mathbb{V}_v$.

Suppose that $v$ and $v'$ are two vertices which differ in one coordinate only, with $v_j = 0$, $v'_j = 1$, and $v_i =v'_i$ for all $i\neq j$.  Then to the edge $e$ connecting $v$ and $v'$ we associate a map $\varphi_e:\mathbb{V}_v \rightarrow \mathbb{V}_{v'}$.  Suppose first that changing the smoothing of the $j$-th crossing from 0 to 1 corresponds to the merging of two circles in $D_v$.  Then on the tensor product factors of $\mathbb{V}_v$ corresponding to these two circles, $\varphi_v$ is defined as
\begin{alignat*}{2}
x \otimes x &\mapsto 0 \qquad 1 \otimes 1 \mapsto 1\\
x \otimes 1 & \mapsto x \qquad  1 \otimes x \mapsto x.
\end{alignat*}
If the smoothing change corresponds to splitting a circle of $D_v$ into two, then on the affected tensor product factor of $\mathbb{V}_v$ we define $\varphi_e$ as 
\begin{alignat*}{2}
x &\mapsto x \otimes x \qquad 1 \mapsto 1 \otimes x + x \otimes 1,
\end{alignat*}
while $\varphi_e$ is the identity on all other tensor product factors of $\mathbb{V}_v$.  It can be shown that the edge maps $\varphi_e$ commute around any 2-dimensional face of $\left[0,1\right]^n$.  Let $U$ denote this cube $\left[0,1\right]^n$ with the assignments $v \mapsto \mathbb{V}_v$ and $e \mapsto \varphi_e$ for each vertex $v$ and edge $e$.

Now define 
\[
C^k(D) = \bigoplus_{r(v)= k + n_-}\mathbb{V}_v,
\]
and let $d^k : C^k(D) \rightarrow C^{k+1}(D)$ be defined by 
\[
d^k = \sum \epsilon_e \varphi_e,
\]
where the sum is over all edges $e$ between vertices $v$ and $v'$ with $r(v') = r(v)+1=k+n_-+1$.  Each $\epsilon_e = \pm 1$, and is chosen so that $d^{k+1} d^k = 0$ for all $k$.  We say that any element of $C^k(D)$ has homological degree $k$.  Note that $C^k(D)$ also inherits a grading from the $\mathbb{V}_v$ which we call the quantum grading, and it is easy to check that $d^k$ has degree 0 with respect to this grading.   

Equipped with this differential, the $C^j(D)$ form a chain complex, which we denote by $\mathscr{C}(D)$.  The homology of $\mathscr{C}(D)$ is the ordinary Khovanov homology $\mathcal{H}(L)$ defined first in \cite{khovanov}.  It is a bigraded vector space, one grading coming from the homological grading of the chain complex $C^*(D)$, the other is inherited from the quantum grading of $\mathbb{V}_v$.  As indicated by the notation, up to isomorphism $\mathcal{H}(L)$ is independent of the choice of diagram $D$.  

\section{Khovanov-Rozansky $sl_2$-homology} \label{sec:KRsl2}  The definition of the Khovanov-Rozansky $sl_n$-homology $\mathcal{H}_n (L)$ is more involved.  Since we are only concerned with the $n=2$ case, for concreteness we will only outline the construction of $\mathcal{H}_2 (L)$.  For $n>2$ the construction is similar, the details of which can be found in \cite{KRI,WuKR}.  We begin by describing matrix factorizations, whose homology plays the role of the vector spaces $\mathbb{V}_v$ in this setup. 

\subsection{Matrix factorizations} Let $R$ be a commutative ring with unity.  A matrix factorization with potential $\omega \in R$ is a pair of free $R$-modules $M^0$ and $M^1$ with maps $d^0 : M^0 \rightarrow M^1$ and $d^1 : M^1 \rightarrow M^0$, such that $d^1d^0 = \omega \cdot \text{id}_{M^0}$ and $d^0d^1= \omega \cdot \text{id}_{M^1}$.  We can treat matrix factorizations much as we would a $\mathbb{Z}_2$-graded chain complex 
\[
M^0 \xrightarrow{\quad d^0 \quad  } M^1 \xrightarrow{\quad d^1 \quad} M^0,
\]
and take tensor products and direct sums to yield new matrix factorizations.  In the case of tensor products we use the Leibniz rule, so that the tensor product of factorizations with potential $\omega$ and $\omega'$ will have potential $\omega+\omega'$.  Note that any matrix factorization with $\omega = 0$ is a cyclic chain complex, and hence we can take its homology.  

A morphism $f$ from the matrix factorization $M=\{M^0 \rightarrow M^1 \rightarrow M^0\}$ to $N=\{N^0 \rightarrow N^1 \rightarrow N^0\}$ consists of a pair of maps $f^0 : M^0 \rightarrow N^0$ and $f^1 :M^1 \rightarrow N^1$ which commute with the differentials on $M$ and $N$.  If $M$ and $N$ both have potential 0, then we say that $f$ is a \emph{quasi-isomorphism} if there is a morphism $g$ from $N$ to $M$ such that both $f \circ g$ and $g \circ f$ are chain homotopic to the identity morphisms.  In this case $f$ and $g$ induce isomorphisms on the homology of $M$ and $N$, and we write $M \cong N$.  While more general notions of equivalence exist for matrix factorizations, this will suffice for our purposes.   

For a matrix factorization $M=\{M^0 \rightarrow M^1 \rightarrow M^0\}$ with $M^0$ and $M^1$ of rank 1, we let $|0\rangle$ and $|1\rangle$ denote the generators of $M^0$ and $M^1$ respectively.  We extend this notation to tensor products of such factorizations.  For example, if $N=\{N^0 \rightarrow N^1 \rightarrow N^0\}$ is another such matrix factorization, then $M\otimes N$ has basis $\{|00\rangle,|11\rangle,|01\rangle,|10\rangle \}$, where $M^0 \otimes N^1$ is generated by $|01\rangle$, $M^1 \otimes N^1$ is generated by $|11\rangle$, etc.

For $a , b \in R$, let $(a,b)_R$ denote the matrix factorization with potential $ab$ given by $R \xrightarrow{\text{  } a \text{  }} R \xrightarrow{\text{  } b \text{  }} R$ (where the differentials are given by multiplication by $a$ and $b$ respectively).  For $a_1,\ldots,a_k,b_1,\ldots,b_k \in R$, let 
\[ 
\left( \begin{array}{cc}
a_1 & b_1  \\
\vdots & \vdots  \\
a_k & b_k  \end{array} \right)_R
\]
denote the tensor product of the $(a_1,b_1)_R, \ldots , (a_k,b_k)_R$.  We will often drop the ring $R$ from the notation when there is no risk of confusion. 

Now let $R=\mathbb{Q}\left[z_1,\ldots,z_m\right]$, $R'=\mathbb{Q}\left[z_2,\ldots,z_m\right]$, and suppose that $M$ is a matrix factorization with potential 0 given by 
\begin{equation}
\label{eqn:eliminationofvariable} 
\left( \begin{array}{cc}
a_1 & b_1  \\
\vdots & \vdots  \\
a_k & b_k  \end{array} \right)_R
\end{equation}
where $a_1,\ldots,a_k,b_1,\ldots,b_k \in R$.  Suppose for some $j$ that $b_j = z_1 - c$, where $c \in R'$.  Let $\pi : R \rightarrow R'$ be the $\mathbb{Q}$-algebra homomorphism with $\pi (z_i) = z_i$ for $i \geq 2$, and $\pi(z_1)=c$.  Consider the factorization $M'$ obtained by dropping the row $(a_j,b_j)$ from (\ref{eqn:eliminationofvariable}), and applying $\pi$ to all the other entries.  Note that both $M$ and $M'$ have potential 0 and can be thought of as matrix factorizations over the ring $R'$.

\begin{lemma}
\label{lem:eliminationofvariables}
$M$ and $M'$ are quasi-isomorphic as matrix factorizations over $R'$.
\end{lemma}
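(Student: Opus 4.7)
My approach is to construct an explicit $R'$-linear chain homotopy equivalence between $M$ and $M'$, via a Koszul-type contraction of the distinguished row. First I would apply two harmless reductions: a change of variable $z_1 \mapsto z_1 + c$ (an automorphism of $R$ fixing $R'$ pointwise) turns $b_j$ into $z_1$, so we may assume $c = 0$; and reordering the tensor factors, which costs only an overall sign in the matrix factorization category, allows us to assume $j = k$. Then $M = N \otimes_R (a_k, z_1)_R$, where $N$ is the tensor product of the first $k-1$ rows. The hypothesis that $M$ has potential $0$ forces $N$ to have potential $-a_k z_1$, and applying $\pi$ identifies $M'$ with $\pi(N)$ as a matrix factorization of potential $0$ over $R'$.

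Next I would exhibit the quasi-isomorphism. Using the $R'$-module splitting $R = R' \oplus z_1 R$, define $\varphi : M \to M'$ to act as $\pi$ on the $N \otimes |0\rangle$ summand and as zero on $N \otimes |1\rangle$. A direct Leibniz computation shows $\varphi$ is a chain map: the component of $d_M$ out of $N \otimes |1\rangle$ coming from the last factor is multiplication by $z_1$, which $\pi$ kills, while the $N \otimes |0\rangle \to N \otimes |1\rangle$ piece carrying multiplication by $a_k$ lands in the summand that $\varphi$ discards. A homotopy inverse $\psi : M' \to M$ is defined via the canonical section of $\pi$, sending $x \in \pi(N)$ to its unique $z_1$-free lift in $N \otimes |0\rangle$; one checks $\varphi \psi = \text{id}_{M'}$ directly.

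The heart of the argument, and what I expect to be the main obstacle, is producing an $R'$-linear, odd-degree endomorphism $h$ of $M$ satisfying $d_M \, h + h \, d_M = \psi \varphi - \text{id}_M$. My plan is to define $h$ componentwise using the $R'$-module decomposition of each summand into $z_1$-free and $z_1$-divisible pieces: $h$ formally divides by $z_1$ and swaps the last coordinate $|0\rangle \leftrightarrow |1\rangle$ on the $z_1$-divisible parts, and vanishes on the $z_1$-free part of $N \otimes |0\rangle$. The verification of the homotopy identity then reduces to the matrix factorization relations $d_N^1 d_N^0 = d_N^0 d_N^1 = -a_k z_1 \cdot \text{id}$: the failure of $d_N$ to commute with formal division by $z_1$ is measured precisely by these relations and cancels against the contributions from the $(a_k, z_1)$ factor. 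The bookkeeping of signs and cross terms here is delicate but essentially algorithmic once one has committed to the $R'$-decomposition.
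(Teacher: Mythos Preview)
Your map $\varphi$ is exactly the quasi-isomorphism the paper writes down: the paper sends $u\cdot|0\beta\rangle \mapsto \pi(u)\cdot|\beta\rangle$ and $v\cdot|1\beta\rangle \mapsto 0$, which after your cosmetic reductions ($c=0$, $j=k$) is precisely ``apply $\pi$ on $N\otimes|0\rangle$, kill $N\otimes|1\rangle$.'' The paper then simply cites \cite{KRI,Rasmussen} for the verification, so you are supplying details the paper omits rather than taking a different route.

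There is one real gap in those details. Your proposed $\psi$, the $z_1$-free lift into $N\otimes|0\rangle$, is \emph{not} a morphism of matrix factorizations: for $z_1$-free $\tilde n$ one has
\[
d_M\psi([\tilde n]) - \psi\, d_{M'}([\tilde n]) \;=\; \bigl(d_N(\tilde n) - \widetilde{d_N(\tilde n)}\bigr)\otimes|0\rangle \;\pm\; a_k\tilde n\otimes|1\rangle,
\]
and the $|1\rangle$-component is nonzero whenever $a_k\neq 0$. Since the paper's definition of quasi-isomorphism requires the inverse to be a genuine morphism, you cannot stop at ``$\varphi\psi=\mathrm{id}$ and $\psi\varphi\simeq\mathrm{id}$'' with this $\psi$. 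The repair is standard: your homotopy $h$ is really a contracting homotopy on $\ker\varphi = z_1 N\otimes|0\rangle \oplus N\otimes|1\rangle$, and the correct formula is simpler than what you wrote---take $h$ to send $z_1 n\otimes|0\rangle \mapsto \pm n\otimes|1\rangle$ and to \emph{vanish} on all of $N\otimes|1\rangle$ (no division by $z_1$ there). One then checks $d_M h + h\,d_M = \mathrm{id}$ on $\ker\varphi$ using only the Leibniz rule for the last factor; the potential relation for $N$ is not needed. With $\ker\varphi$ contractible and the sequence $0\to\ker\varphi\to M\to M'\to 0$ split over $R'$ in each $\mathbb{Z}_2$-degree, a chain-map section of $\varphi$ exists by the usual argument, completing the homotopy equivalence.
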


\begin{proof}
Recall that $M$ is a free $R$-module with basis consisting of all $|\alpha \rangle$, where $\alpha$ is a string of length $k$ in 0s and 1s.  Let $\beta$ be any string of 0s and 1s of length $k-1$, and $u,v \in R$.  The map $M \rightarrow M'$ sending
\begin{alignat*}{2}
u \cdot | 0 \beta \rangle  &\mapsto \pi (u) \cdot | \beta \rangle\\
v \cdot | 1 \beta \rangle  & \mapsto 0
\end{alignat*}
is a quasi-isomorphism (see \cite{KRI,Rasmussen} for details).
\end{proof}

Note that the lemma also holds with the roles of $a_j$ and $b_j$ reversed, by reversing the roles of 0 and 1 in the isomorphism and shifting the $\mathbb{Z}_2$-matrix factorization grading.  In what follows we will make repeated use of Lemma \ref{lem:eliminationofvariables} to simplify different matrix factorizations.  For notational convenience however, we will avoid renaming the ground ring each time we do this, and instead think of this procedure as dropping a variable from the original ground ring $R$.      

We will also need the following lemma, whose proof is in \cite{KRI}:

\begin{lemma}
For any invertible $c \in R$, $M$ is quasi-isomorphic to a matrix factorization obtained by replacing the row $(a_j,b_j)$ of (\ref{eqn:eliminationofvariable}) with $(ca_j,c^{-1}b_j)$.
\end{lemma}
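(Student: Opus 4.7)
The plan is to construct an explicit isomorphism of matrix factorizations, rather than a mere chain homotopy equivalence. Since tensor product is functorial, it suffices to treat the single-row case and then tensor with the identity on all other rows.

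First I would focus on the $j$-th row in isolation. The factorization $(a_j,b_j)_R$ is $R \xrightarrow{a_j} R \xrightarrow{b_j} R$, while its proposed replacement $(ca_j,c^{-1}b_j)_R$ is $R \xrightarrow{ca_j} R \xrightarrow{c^{-1}b_j} R$. I would define a morphism $f: (a_j,b_j)_R \to (ca_j,c^{-1}b_j)_R$ by taking $f^0 = \mathrm{id}_R$ and $f^1$ to be multiplication by $c$. The two commuting-square conditions then read $c \cdot a_j = (ca_j) \cdot 1$ and $1 \cdot b_j = (c^{-1}b_j) \cdot c$, both of which hold on the nose in $R$. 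Since $c$ is invertible, the morphism $g$ with $g^0 = \mathrm{id}_R$ and $g^1 = c^{-1}\cdot$ is a two-sided inverse, so $f$ is actually an isomorphism of matrix factorizations, not merely a quasi-isomorphism.

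Next I would promote this to the full factorization (\ref{eqn:eliminationofvariable}). Writing $N = \bigotimes_{i \neq j}(a_i,b_i)_R$, the matrix factorization $M$ is isomorphic to $(a_j,b_j)_R \otimes N$, and the modified one is $(ca_j,c^{-1}b_j)_R \otimes N$. Tensoring $f$ with the identity on $N$ gives a morphism $f \otimes \mathrm{id}_N$ between them, and tensoring $g$ with $\mathrm{id}_N$ gives its inverse, so this is again an isomorphism of matrix factorizations over $R$. Since an isomorphism is in particular a quasi-isomorphism (with chain homotopies equal to zero), the claim follows.

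There is no real obstacle here: $c$ being invertible makes multiplication by $c$ an $R$-linear bijection, so the potentially worrying step — checking that the rescaling is reversible and compatible with both differentials $d^0$ and $d^1$ — is immediate. The only thing to be careful about is not mistaking a scaling by $c$ for an operation that shifts the $\mathbb{Z}_2$-grading of the factorization; here $f^0$ and $f^1$ both have the same degree, so the $\mathbb{Z}_2$-grading is preserved and the resulting isomorphism can be used interchangeably with $M$ in later arguments.
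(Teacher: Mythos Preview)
Your argument is correct: rescaling the degree-$1$ piece of the $j$-th row by the unit $c$ gives an honest isomorphism of matrix factorizations, and tensoring with the identity on the remaining rows carries this to the full factorization $M$. The paper does not actually supply a proof of this lemma but simply refers the reader to \cite{KRI}; the argument there is exactly the one you have written down, so your proposal matches the intended proof.
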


\subsection{Marked trivalent graphs} We now describe a class of planar graphs we will need during the construction.  These graphs consist of two types of edges, regular edges and wide edges.  Each vertex is either incident to three edges, two of which are regular with the other one wide, or
incident to a single regular edge.  We also allow disjoint simple loops in our graphs, which we think of as regular edges with no starting or ending vertex.  Regular edges are oriented, and each wide edge shares one endpoint with two outgoing regular edges and the other endpoint with two incoming regular edges.

Finally, we add a finite collection of marked points $\{z_i\}$ to the regular edges, so that each regular edge has at least one marked point on it, and so that the univalent vertices are all marked (see Figure \ref{fig:trivalent}, where we denote the marked points on regular edges with arrow heads which also indicate the orientation).  We will refer to such an object as a \emph{marked trivalent graph}.  We call such a graph \emph{closed} if it has no vertices of degree 1.
\begin{figure}[h]
 \centering
 \includegraphics[width=9cm]{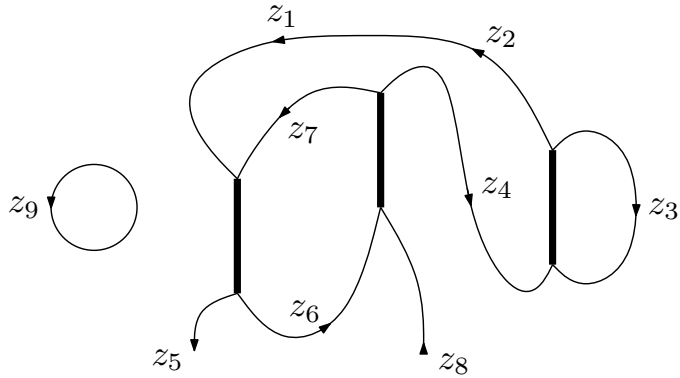}
  \caption{A marked trivalent graph}
  \label{fig:trivalent}
\end{figure} 

\subsection{Trivalent graphs and graded matrix factorizations} Let $\Gamma$ be a marked trivalent graph with marked points $\{z_1, \ldots, z_m\}$, and let $R = \mathbb{Q} \left[ z_1 , \ldots , z_m \right]$.  We make $R$ into a graded ring by setting $\text{deg }z_i=2$ for each $i$.  

Now if $z_i$ and $z_j$ are adjacent marked points on the same regular edge of $\Gamma$ which is oriented from $z_j$ to $z_i$, and $L^i_j$ is the arc connecting them, then define $C(L^i_j)$ be the matrix factorization
\[
R \xrightarrow{\quad z^2_i+z_iz_j+z^2_j \quad  } R\{-1\} \xrightarrow{\quad z_i-z_j \quad} R.
\]
We also allow the case when $z_i=z_j$, on a loop with a single marked point.  Note with the grading shifts, the differentials are homogenous of degree 3, and that $C_2(L^i_j)$ has potential $z_i^3-z_j^3$.

\begin{figure}
 \centering
 \includegraphics[width=1.75cm]{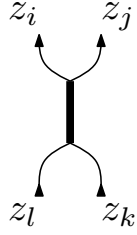}
  \caption{A wide edge with surrounding marked points}
  \label{fig:wideedge}
\end{figure} 

Now let $E^{ij}_{kl}$ be a wide edge of $\Gamma$ with surrounding marked points as in Figure \ref{fig:wideedge}.  Then to $E^{ij}_{kl}$ we associate the matrix factorization $C_2(E^{ij}_{kl})$, which is the tensor product of factorizations
\[
R \xrightarrow{\quad -3 \left(z_k+z_l\right) \quad  } R\{1\} \xrightarrow{\quad z_iz_j-z_kz_l \quad} R,
\]
and
\[
R\{-1\} \xrightarrow{\quad u(z_i,z_j,z_k,z_l) \quad  } R\{-2\} \xrightarrow{\quad z_i+z_j-z_k-z_l \quad} R\{-1\},
\]
where
\[
u(z_i,z_j,z_k,z_l)=z_i\left(z_k+z_l-z_j\right)+z_i^2+z_j\left(z_k+z_l\right)+z_j^2+\left(z_k+z_l\right)^2.
\]
Note again that the differentials are all homogeneous of degree 3, and that $C_2(E^{ij}_{kl})$ has potential $z_i^3+z_j^3-z_k^3-z_l^3$.

Now to the marked trivalent graph $\Gamma$ we associate the matrix factorization  
\[
C_2(\Gamma) = \bigotimes_{E^{ij}_{kl}} C_2(E^{ij}_{kl}) \otimes \bigotimes_{L^i_j}
C_2(L^i_j)
\]
where the first tensor product runs over all wide edges $E^{ij}_{kl}$ of $\Gamma$, and the second runs over all of its oriented arcs $L^i_j$ as described in above.  If $\Gamma$ is closed, then $C_2(\Gamma)$ is a matrix factorization with potential 0, and thus we can take the homology of $C_2(\Gamma)$, which we denote by $H_2(\Gamma)$.  Since the differentials on the tensor factors are all homogeneous, $H_2(\Gamma)$ inherits a $\mathbb{Z}$-grading from the grading on $R$ (which we refer to as the quantum grading), as well as a $\mathbb{Z}_2$-homological grading from the matrix factorization structure.  It turns out however that $H_2(\Gamma)$ is contained entirely in one of the $\mathbb{Z}_2$ degrees, and hence we ignore this grading from now on.  

\subsection{Morphisms $\chi_0$ and $\chi_1$} 
\begin{figure}
 \centering
 \includegraphics[width=7cm]{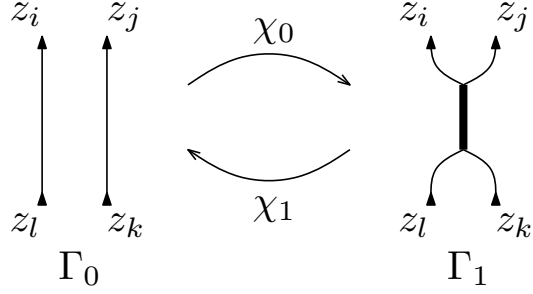}
  \caption{Maps $\chi_0$ and $\chi_1$}
  \label{fig:chimaps}
\end{figure}

Consider now the two graphs $\Gamma_0$ and $\Gamma_1$ in Figure \ref{fig:chimaps}.  Note that $C_2(\Gamma_0)= C_2(L^j_k) \otimes C_2(L^i_l)$ and $C_2(\Gamma_1) = C_2(E^{ij}_{kl})$ are both tensor products of two such matrix factorizations.  For both $i$, order the tensor product basis of $C^0_2(\Gamma_i)$ as $\{|00\rangle, |11\rangle \}$, and the basis of $C^1_2(\Gamma_i)$ as $\{|01\rangle, |10\rangle \}$.  

We now define a morphism $\chi_0 : C_2(\Gamma_0) \rightarrow C_2(\Gamma_1)$, which is described in the above basis by matrices
\[ 
U^0=\left( \begin{array}{cc}
z_i-z_k & 0 \\
z_i-z_j-2 z_k-z_l & 1
\end{array} \right)
\qquad
U^1=\left( \begin{array}{cc}
z_i & -z_k \\
-1 & 1
\end{array} \right).
\]
Likewise we can define a morphism $\chi_1 : C_2(\Gamma_1) \rightarrow C_2(\Gamma_0)$ in the above bases by the pair of matrices
\[ V^0=\left( \begin{array}{cc}
1 & 0 \\
-z_i+z_j+2 z_k+z_l & z_i- z_k
\end{array} \right)
\qquad
V^1=\left( \begin{array}{cc}
1 & z_k \\
1 & z_i
\end{array} \right).
\]
Note that both of these maps raise the quantum grading by 1.

\subsection{The Khovanov-Rozansky complex $\mathscr{C}_2(D)$} We are now ready to describe the complex $\mathscr{C}_2(D)$ for an oriented link diagram $D$.  Begin by placing marks on $D$ so that none of the crossings are marked, and so that each component of $D$ and arc between two crossings has a marked point.  From now on, let $R = \mathbb{Q}[z_1,\ldots,z_m]$, where $z_1,\ldots,z_m$ are the marked points on $D$.  

Now if $L^i_j$ is an arc in the diagram of $D$ between points $z_j$ and $z_i$ with no other marked points or crossings in between, we define $\mathscr{C}_2(L^i_j)$ to be the following complex of matrix factorizations
\[
0 \longrightarrow C_2(L^i_j) \longrightarrow 0,
\]
where we assign $C_2(L^i_j)$ to homological degree 0.

If $P_+$ is a positive crossing, we define the chain complex of matrix factorizations $\mathscr{C}_2(P_+)$ to be 
\begin{equation}
\label{eqn:P+}
0 \longrightarrow C_2(\Gamma_0)\{-1\} \xrightarrow{\quad \chi_0 \quad} C_2(\Gamma_1)\{-2\} \longrightarrow 0,
\end{equation}
while for $P_-$ a negative crossing we define $\mathscr{C}_2(P_-)$ to be 
\begin{equation}
\label{eqn:P-}
0 \longrightarrow C_2(\Gamma_1)\{2\} \xrightarrow{\quad \chi_1 \quad} C_2(\Gamma_0)\{1\} \longrightarrow 0.
\end{equation}
In both cases we assign $C_2(\Gamma_0)$ to homological degree 0.

Now we define $\mathscr{C}_2(D)$ to be the chain complex of matrix factorizations
\begin{equation}
\label{eqn:chaincomplex}
\mathscr{C}_2(D) = \bigotimes_{L^i_j} \mathscr{C}_2(L^i_j) \otimes \bigotimes_{P_+} \mathscr{C}_2 (P_+) \otimes \bigotimes_{P_-} \mathscr{C}_2 (P_-),
\end{equation}
where the product runs over all such arcs $L^i_j$ and crossings $P_{\pm}$.  Since $\mathscr{C}_2(D)$ is made up of direct sums and tensor products of graded matrix factorizations, it inherits the structure of a graded matrix factorization with differential $d_{mf}$ and 0 potential.  Furthermore there is a separate differential $d_{\chi}$ on $\mathscr{C}_2(D)$ coming from the complexes $\mathscr{C}_2(L^i_j)$, $\mathscr{C}_2(P_+)$, $\mathscr{C}_2(P_-)$, and the Leibniz rule in (\ref{eqn:chaincomplex}).  This homological differential commutes with the matrix factorization differential $d_{mf}$, and hence induces a differential on $H(\mathscr{C}_2(D),d_{mf})$.  Thus we can take the homology of $H(\mathscr{C}_2(D),d_{mf})$ under this induced differential $d^*_{\chi}$, which gives the Khovanov-Rozansky $sl_2$-homology $\mathcal{H}_2(L)$ of the link $L$.  In other words, 
\[
\mathcal{H}_2(L) = H(H(\mathscr{C}_2(D),d_{mf}),d^*_{\chi}).
\]
As mentioned above, $\mathcal{H}_2(L)$ inherits both the homological and quantum gradings of $\mathscr{C}_2(D)$, and up to bigrading preserving isomorphism does not depend on our choice of diagram $D$ or the markings $\{z_1, \ldots,z_m\}$.

\subsection{$\mathscr{C}_2(D)$ as a cube of resolutions} \label{subsection:KRcube} We can understand the above construction in terms of a cube of resolutions similar to the case of ordinary Khovanov homology.  In Figure \ref{fig:resolutions} we define 0 and 1-resolutions for a positive crossing $P_+$, and $-1$ and 0-resolutions for a negative crossing $P_-$.
\begin{figure}
 \centering
 \includegraphics[width=8cm]{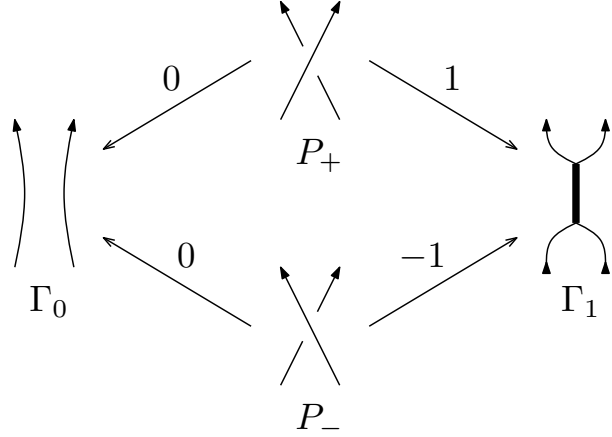}
  \caption{Resolutions of positive and negative crossings of $D$}
  \label{fig:resolutions}
\end{figure}
After fixing an ordering of the crossing of $D$ where all positive crossings are listed first, to each vertex of the cube $\left[0,1\right]^{n_+} \times \left[-1,0\right]^{n_-}$ we can assign a total resolution of $D$ much as we assigned total smoothings to vertices of $\left[ 0,1\right]^n$ in the ordinary case.  With markings on $D$ as above, each total resolution will give a closed marked trivalent graph, which we denote by $\Gamma_v$, where $v$ is the corresponding vertex of $\left[0,1\right]^{n_+} \times \left[-1,0\right]^{n_-}$.

Ignoring the differential $d_{\chi}$ on $\mathscr{C}_2(D)$, we see that $\mathscr{C}_2(D)$ splits as a direct sum of matrix factorizations
\[
\mathscr{C}_2(D) = \bigoplus_v C_2(\Gamma_v)\{p_v\},
\]  
where $v$ runs over all vertices of the cube, and $p_v$ is some integer quantum grading shift.  Indeed, the $R$-submodule $\mathscr{C}^j_2(D)$ sitting in homological degree $j$ of the complex $\mathscr{C}_2(D)$ will itself be a matrix factorization, and will split as 
\[
\mathscr{C}^j_2(D) = \bigoplus_{\sum v_i = j} C_2(\Gamma_v)\{p_v\},
\]
where $v=(v_1,\ldots,v_n) \in \{0,1\}^{n_+} \times \{-1,0\}^{n_-}$.

To each vertex $v$ we assign the matrix factorization $C_2(\Gamma_v)\{p_v\}$.  Suppose that $v=(v_1,\ldots,v_n)$ and $v'=(v'_1,\ldots,v'_n)$ are two vertices which differ in a single coordinate, with $v_j = v'_j-1$, and $v_i=v'_i$ for all $i \neq j$.  Then $v$ and $v'$ are joined by an edge $e$, and we define an edge morphism $\psi_e : C_2 (\Gamma_v)\{p_v\} \rightarrow C_2 (\Gamma_{v'})\{p_{v'}\}$ as follows.  Suppose first that the corresponding crossing is positive, so that $v_j=0$ while $v'_j=1$.  Then the graphs $\Gamma_{v'}$ is obtained from $\Gamma_v$ by making a local replacement of the form in Figure \ref{fig:chimaps} (left-to-right), and    
\[
C_2 (\Gamma_v) = C_2 (L^j_k) \otimes C_2 (L^i_l) \otimes M \quad \text{ and } \quad C_2 (\Gamma_{v'}) = C_2 (E^{ij}_{kl}) \otimes M,
\]
where $M$ is some matrix factorization corresponding to the unchanging crossings and arcs in $\Gamma_v$.  Then define $\psi_e = \chi_0 \otimes \text{id}_M$.  Likewise in the case were $e$ corresponds to a negative crossing with $v_j=-1$ and $v'_j=0$ (right-to-left replacement in Figure \ref{fig:chimaps}), then 
\[
C_2 (\Gamma_v) = C_2 (E^{ij}_{kl}) \otimes N \quad \text{ and } \quad C_2 (\Gamma_{v'}) = C_2 (L^j_k) \otimes C_2 (L^i_l) \otimes N,
\]
and we define $\psi_e = \chi_1 \otimes \text{id}_N$.  Notice that 
\begin{equation}
\label{eqn:edgesigns}
d_{\chi} = \sum_{e} \varepsilon_e \psi_e,
\end{equation}
where the sum is over all edges $e$ of the cube, and the $\varepsilon_e = \pm 1$ are determined by the Leibniz rule in (\ref{eqn:chaincomplex}).  Notice also that the edge maps $\psi_e$ commute around any 2-dimensional face of $\left[0,1\right]^{n_+} \times \left[-1,0\right]^{n_-}$.

After taking the homology of each matrix factorization $C_2(\Gamma_v)\{p_v\}$, we obtain an assignment of graded $R$-modules $H_2(\Gamma_v)\{p_v\}$ to each vertex $v$.  Furthermore, the $\psi_e$ commute with the matrix factorization differentials on $C_2(\Gamma_v)\{p_v\}$ and $C_2(\Gamma_{v'})\{p_{v'}\}$, and hence induce maps $H_2(\Gamma_v)\{p_v\} \rightarrow H_2(\Gamma_{v'})\{p_{v'}\}$, which we will also denote as $\psi_e$.  We therefore let $U_2$ denote the cube $\left[0,1\right]^{n_+} \times \left[-1,0\right]^{n_-}$ with the assignments $v \mapsto H_2(\Gamma_v)\{p_v\}$ and $e \mapsto\psi_e$.  


\section{The Khovanov-Rozansky cube $U_2$}
\label{sec:computingU2}

In order to prove Theorem \ref{maintheorem} we will need explicit descriptions of the spaces $H_2(\Gamma_v)\{p_v\}$ and maps $\psi_e$ associated to the vertices and edges of $U_2$, which we compute in this section.  

\subsection{Computing $H_2(\Gamma)$}  Suppose $\Gamma=\Gamma_v$ has marked points $\{ z_1,\ldots,z_m\}$. Consider the graph $\widetilde{\Gamma}$ obtained by removing all of the wide edges of $\Gamma$.  Then $\widetilde{\Gamma}$ will consist of a collection of $k$ disjoint loops in the plane, each with at least one marked point.  Suppose that the $z_j$ are numbered so that each of the first $k$ points $\{z_1, \ldots ,z_k\}$ lie on a different component of $\widetilde{\Gamma}$.    

\begin{lemma}
\label{lem:matrixfactorizationstructure}
$H_2 (\Gamma)$ is isomorphic as a graded vector space to $\mathbb{Q}\left[z_1,\ldots ,z_k\right] / (z_1^2, \ldots , z_k^2)$, where the $q$-grading is shifted so that $\text{deg }1=-k$ and each $z_i$ has $\text{deg }z_i = 2$.
\end{lemma}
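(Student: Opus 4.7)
The plan is to compute $H_2(\Gamma)$ by iteratively applying Lemma \ref{lem:eliminationofvariables} to drop rows and eliminate variables from the tensor-product factorization $C_2(\Gamma) = \bigotimes_{E^{ij}_{kl}} C_2(E^{ij}_{kl}) \otimes \bigotimes_{L^i_j} C_2(L^i_j)$. First I would process the arc factorizations: each $C_2(L^i_j)$ contributes a row whose $b$-entry is $z_i-z_j$, so Lemma \ref{lem:eliminationofvariables} lets us drop that row and set $z_j=z_i$ throughout the remaining factorization. Doing this arc by arc along each regular edge identifies all marked points lying on a common regular edge with a single representative variable. The only arcs that cannot be fed into Lemma \ref{lem:eliminationofvariables} in this way are the $L^z_z$ coming from a loop of $\widetilde{\Gamma}$ carrying just one marked point; these surviving arcs have $b=0$ and $a=3z^2$, and so directly contribute a factor of $\mathbb{Q}[z]/(z^2)$ (with the expected grading shift) to the final homology.

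Next I would process the wide-edge factorizations. The second row of each $C_2(E^{ij}_{kl})$ has linear $b$-entry $z_i+z_j-z_k-z_l$, so a further application of Lemma \ref{lem:eliminationofvariables} eliminates one of the four variables at each wide edge, say by setting $z_i = z_k+z_l-z_j$. Substituting into the first row yields the simplified tensor factor $(-3(z_k+z_l),\, -(z_j-z_k)(z_j-z_l))$, using the identity $z_iz_j - z_kz_l = -(z_j-z_k)(z_j-z_l)$, which holds after substitution.

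The key step is then a combinatorial argument showing that if we choose the surviving variable at each wide edge consistently as we traverse each loop of $\widetilde{\Gamma}$, every nonrepresentative variable can be written as a $\mathbb{Z}$-linear combination of the chosen loop representatives $z_1,\ldots,z_k$, and each remaining quadratic $b$-entry collapses into the single relation $z_\alpha^2=0$ for the loop $L_\alpha$ to which it belongs. Combined with the $\mathbb{Q}[z]/(z^2)$ contributions from any single-mark loops, this yields the claimed graded ring $\mathbb{Q}[z_1,\ldots,z_k]/(z_1^2,\ldots,z_k^2)$. The main obstacle is precisely this last combinatorial step: verifying that the successive eliminations can be organized so that each loop of $\widetilde{\Gamma}$ contributes exactly one independent $z_\alpha^2$ relation and that all other resulting relations are redundant. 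Throughout I would keep a running tally of the grading shifts introduced by the dropped rows (each contributing a $\{-1\}$ or $\{-2\}$), and verify at the end that the total shift matches the normalization $\deg 1 = -k$ claimed in the lemma.
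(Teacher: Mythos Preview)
Your overall strategy of iterating Lemma~\ref{lem:eliminationofvariables} is right, and your handling of the arcs $L^i_j$ and of the second row of each $C_2(E^{ij}_{kl})$ is fine. The gap is that you stop one row short at each wide edge, and this is exactly what forces you into the unspecified ``combinatorial argument'' at the end.

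After dropping the second row via $z_i = z_k+z_l-z_j$, you are left with the row $\bigl(-3(z_k+z_l),\,-(z_j-z_k)(z_j-z_l)\bigr)$ and treat its quadratic $b$-entry as an obstacle. But the $a$-entry is still linear in $z_l$. The paper exploits this: after rescaling by the unit $-\tfrac{1}{3}$ (the lemma stated just after Lemma~\ref{lem:eliminationofvariables}), the role-reversed form of Lemma~\ref{lem:eliminationofvariables} lets you drop this row too, setting $z_l=-z_k$. In fact the paper processes the two rows in the opposite order from you, killing the first row first via its linear $a$-entry ($z_l\mapsto -z_k$) and then the second via its $b$-entry ($z_j\mapsto -z_i$); either order gives the same net substitutions. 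With \emph{both} rows gone the wide edge is erased, and adjacent marks across each former trivalent vertex are identified up to sign.

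Once all wide-edge rows are removed there are no quadratic entries left, and the only remaining work is to check that on each loop of $\widetilde{\Gamma}$ the accumulated $\pm$ signs close up, so that the last surviving arc factor on that loop is $(3z^2,0)$. The paper does this with a short parity argument: at every former trivalent vertex both incident regular edges point in (or both out), so the edge orientation flips at every vertex as you traverse a loop; returning to the starting edge with its original orientation forces an even number of vertices on each loop, hence accumulated sign $+1$.

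Your route, by contrast, leaves one row per wide edge still in play. At that stage you have exhausted all linear $b$-entries, so there are no further linear relations among the surviving variables from the $b$-column; your claim that ``every nonrepresentative variable can be written as a $\mathbb{Z}$-linear combination of the chosen loop representatives'' has nothing to feed on unless you also invoke the linear $a$-entries---which is precisely the missing step. The phrase ``the loop $L_\alpha$ to which it belongs'' is also not well posed for a wide-edge row, since the top pair $z_i,z_j$ and the bottom pair $z_k,z_l$ may lie on different loops of $\widetilde{\Gamma}$. The clean fix is simply to eliminate that remaining row via its $a$-entry as above; the grading bookkeeping then becomes trivial too, since the two eliminations at a wide edge shift the $q$-degree by $-1$ and $+1$ respectively, and the $k$ surviving generators $|1\rangle$ account for $\deg 1=-k$.
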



\begin{proof}
Note first that if any regular edge of $\Gamma$ contains more than one marked point, say $z_i$ and $z_j$, then it contributes a factor of 
\[
C_2 (L^i_j) = \left\{ R \xrightarrow{\quad z^2_i+z_iz_j+z^2_j \quad  } R\{-1\} \xrightarrow{\quad z_i-z_j \quad} R\right\},
\]
to $C_2 (\Gamma)$.  By Lemma \ref{lem:eliminationofvariables} we can eliminate this factor by replacing $z_j$ with $z_i$ everywhere in $C_2 (\Gamma)$, and dropping the variable $z_j$ from the base ring $R$.  This elimination isomorphism is graded of degree 0.  We think of this graphically as removing the marked point $z_j$.  

We can also reverse this procedure, and for convenience will assume that each regular edge of $\Gamma$ has more than one marked point.

We will simplify $C_2 (\Gamma)$ by first eliminating the factors associated to its wide edges one at a time.  Let $C_2 (E^{ij}_{kl})$ be such a factor, where because of our assumption on regular edges we can assume that $z_i,z_j,z_k$ and $z_l$ are all distinct. Then (ignoring $q$-grading shifts momentarily) we have
\[
\left( \begin{array}{cc}
-3 \left(z_k+z_l\right) & z_i z_j-z_k z_l  \\
u(z_i,z_j,z_k,z_l) & z_i+z_j-z_k-z_l \end{array} \right) \cong \left( \begin{array}{cc}
u(z_i,z_j,z_k,-z_k) & z_i+z_j \end{array} \right) \langle 1 \rangle,
\]
where $\langle 1 \rangle$ denotes the matrix factorization with the $\mathbb{Z}_2$-matrix factorization gradings of the modules and differentials shifted by 1.  The first row is eliminated while replacing $z_l$ with $-z_k$, and the remaining row can be eliminated while replacing $z_j$ with $-z_i$, with both replaced variables being dropped from the ring.  The elimination isomorphism of the first row is graded of degree $-1$, while the second is graded of degree $+1$.

Thus if $M$ denotes the matrix factorization made up of all the factors of $C_2 (\Gamma)$ except $C_2(E^{ij}_{kl})$, then 
\[
C_2 (\Gamma) = C_2(E^{ij}_{kl}) \otimes M \cong M\langle 1 \rangle,
\]
where the quasi-isomorphism preserves the $q$-grading and is over the ring polynomial with $z_l$ and $z_j$ dropped.  We think of this isomorphism graphically as removing the wide edge $E^{ij}_{kl}$, while relabeling marked points as in Figure \ref{fig:removewideedge}.
\begin{figure}
 \centering
 \includegraphics[width=5cm]{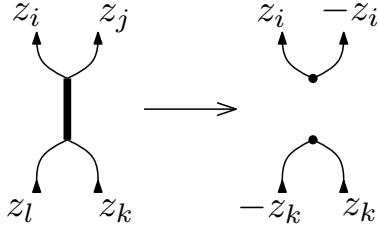}
  \caption{Removal of a wide edge $E^{ij}_{kl}$ while identifying marked points}
  \label{fig:removewideedge}
\end{figure}

\begin{figure}
 \centering
 \includegraphics[width=7cm]{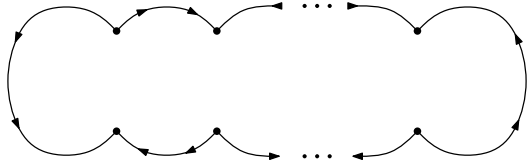}
  \caption{Component of $\widetilde{\Gamma}$ after wide edge removal}
  \label{fig:nowideedges}
\end{figure}

After removing all wide edges from $\Gamma$ we are left with the graph $\widetilde{\Gamma}$.  Each component will consist of oriented regular edges with marked points, as well as vertices left over after the removal of the wide edges.  Each such vertex will be incident to two regular edges, both of which will be oriented either inwards or outwards.  Furthermore the labels on the two marked points nearest a vertex on either side will differ by a sign.

The remaining factors in $C_2(\Gamma)$ will all come from the arcs $L^i_j$ between adjacent marked points on a single regular edge.  Letting $T$ be a component of $\widetilde{\Gamma}$, we use Lemma \ref{lem:eliminationofvariables} as above to eliminate all but a single factor of $C_2(L^i_j)$ coming from the regular edges of $T$.  Graphically this corresponds to removing all but a single marked point from each regular edge of $T$, except for a single edge on which we leave two marked points.

Let $z_i$ and $z_j$ be the markings on the last remaining regular $e$ edge of $T$ which has more than one marking (oriented from $z_j$ to $z_i$).  Now suppose we leave $z_i$ along $e$ in the positively oriented direction and cross the nearest vertex.  This will bring us to a regular edge whose orientation disagrees with that of $e$, and whose marked point is labelled with $-z_i$.  Crossing the next vertex takes us to an edge whose orientation agrees with that of $e$ and whose marked point is labelled with $z_i$.  Continuing on until we return to the marked point $z_j$ on $e$, we see that $z_i = z_j$.  Thus the lone remaining factor from $T$ in $C_2 (\Gamma)$ is of the form 
\[
C_2 (L^i_i) = \left\{ R \xrightarrow{\quad 3z^2_i \quad  } R\{-1\} \xrightarrow{\quad 0 \quad} R\right\},
\]
and if the marked points are labeled so that one of the points $\{z_1 , \ldots , z_k\}$ lives on each of the $k$ components of $\widetilde{\Gamma}$, then 
\begin{equation}
\label{eqn:simplifiedmatrixfactorization}
C_2 (\Gamma) \cong \bigotimes_{i=1}^k C_2 (L^i_i),
\end{equation}
where the quasi-isomorphism preserves the $q$-grading.  It thus follows that 
\[
H_2 (\Gamma) \cong \bigotimes_{i=1}^k H_2 (L^i_i) \cong \left(\mathbb{Q} \left[z_1, \dots , z_k\right]/(z_1^2, \ldots , z_k^2)\right) \{-k\},
\]
where $1 \in \mathbb{Q}\left[z_1, \dots , z_k\right]/(z_1^2, \ldots , z_k^2)$ corresponds to $|11\cdots 1 \rangle$ in the tensor product basis of (\ref{eqn:simplifiedmatrixfactorization}), and has grading $-k$.
\end{proof}

Thus we see that for each vertex $v$ of $U_2$ we have $H_2(\Gamma_v) \cong \mathbb{Q}[z_{i_1},\ldots ,z_{i_j}]/(z_{i_1}^2,\ldots ,z_{i_j}^2)$ for some subset $\{z_{i_1},\ldots ,z_{i_j}\}$ of the marked points $\{z_1,\ldots,z_m\}$ on $D$.  For each vertex $v$, let $R_v = \mathbb{Q}[z_{i_1},\ldots ,z_{i_j}]$ denote the polynomial ring over this subset of marked points.

\subsection{Computing the edge maps $\psi_e$} \label{sec:computingedgemaps} Using Lemma \ref{lem:eliminationofvariables} we can also rewrite the edge maps $\psi_e: H_2(\Gamma_v) \rightarrow H_2(\Gamma_{v'})$ in a simpler form.  Indeed, since each edge corresponds to changing the resolution at a single crossing, we can use the lemma to eliminate all the factors of $C_2(\Gamma_v)$ corresponding to wide edges and extra regular arcs away from the affected crossing.  This will result in relabeling marked points on the graph as variables are identified in the polynomial ring.  The edge maps $\psi_e$ are then given by $\widehat{\chi}_i \otimes \text{id}_M$ similar to before, where $\widehat{\chi}_i$ is given by making the same variable identifications in the matrix description of $\chi_i$, and $M$ denotes the product of all the unaffected factors.  Furthermore, since components of $\widetilde{\Gamma}_v$ and $\widetilde{\Gamma}_{v'}$ correspond to tensor product factors in the homology, we can safely ignore all but the affected components, since $\psi_e$ will be the identity on all these factors. 
\begin{figure}
 \centering
 \includegraphics[width=11cm]{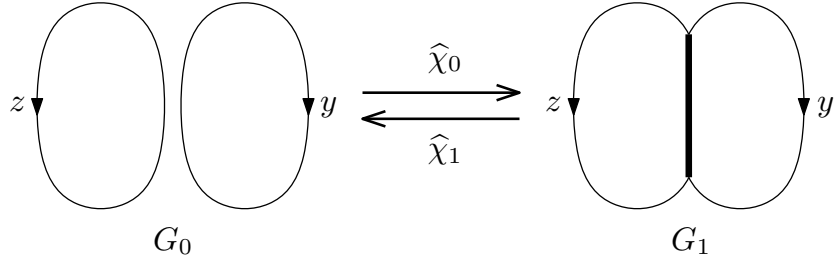}
 \caption{Local edge maps}
  \label{fig:edgemapsnew}
\end{figure}

\begin{figure}
 \centering
 \includegraphics[width=7.5cm]{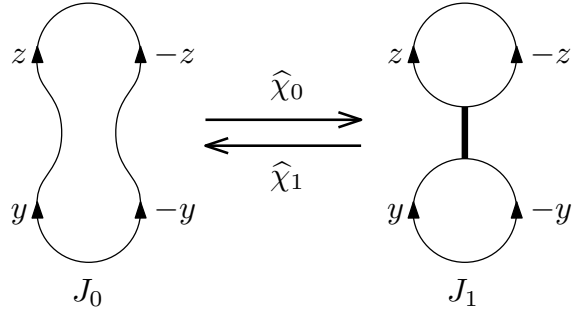}
 \caption{Local edge maps}
  \label{fig:edgemaps1}
\end{figure}

Consider first the two graphs $G_0$ and $G_1$ in Figure \ref{fig:edgemapsnew}.  After eliminating all factors corresponding to any unaffected wide edges and extra regular arcs, nearby markings will be relabeled as shown.  These graphs therefore correspond to matrix factorizations 
\[ 
C_2(G_0)=\left( \begin{array}{cc}
3y^2 & 0 \\
3z^2 & 0
\end{array} \right)_{\mathbb{Q}[y,z]}
\qquad \text{and} \qquad
C_2(G_1)=\left( \begin{array}{cc}
-3(y+z) & 0 \\
3(y^2+yz+z^2) & 0
\end{array} \right)_{\mathbb{Q}[y,z]}.
\]
As a $\mathbb{Q}[y,z]$-modules, $H_2(G_0)$ is generated by the element $|11\rangle_{G_0}$, while $H_2(G_1)$ is generated by $|11\rangle_{G_1}$.  Then it is easy to check that
\[
\widehat{\chi}_0 (|11\rangle_{G_0}) = |11\rangle_{G_1} \qquad \text{and} \qquad \widehat{\chi}_1 (|11\rangle_{G_1}) = (y-z)|11\rangle_{G_0}.
\]

If $J_0$ and $J_1$ are the two graphs in Figure \ref{fig:edgemaps1}, then after eliminating all the unaffected factors and relabeling marked points we obtain matrix factorizations
\[ 
C_2(J_0)=\left( \begin{array}{cc}
y^2+yz+z^2 & z-y \\
y^2+yz+z^2 & y-z
\end{array} \right)_{\mathbb{Q}[y,z]}
\qquad \text{and} \qquad
C_2(J_1)=\left( \begin{array}{cc}
0 & y^2-z^2 \\
3z^2 & 0
\end{array} \right)_{\mathbb{Q}[y,z]}.
\]
Then $H_2(J_0)$ is generated as a $\mathbb{Q}[y,z]$-module by $|01\rangle_{J_0}+|10\rangle_{J_0}$, while $H_2(J_1)$ is generated by $|01\rangle_{J_1}$.  Then 
\[
\widehat{\chi}_0 (|01\rangle_{J_0}+|10\rangle_{J_0}) = (y+z)|01\rangle_{J_1} \qquad \text{and} \qquad \widehat{\chi}_1 (|01\rangle_{J_1}) = |01\rangle_{J_0}+|10\rangle_{J_0}.
\]

\begin{rem}
The above computations are dependent on the planar configurations of the diagrams in Figures \ref{fig:edgemapsnew} and \ref{fig:edgemaps1}.  In particular, they do not hold if we allow virtual crossings in our link diagrams.  The isomorphism we describe below is dependent on these computations, and hence breaks down in the case of virtual knots and links.     
\end{rem}

\section{Identifying the cubes of resolutions $U$ and $U_2$} 
\label{sec:identifyingcubes}

To prove Theorem \ref{maintheorem}, we will show that the cube $U_2$ from Section \ref{sec:KRsl2} is isomorphic to the ordinary Khovanov cube of resolutions $U$ as described in Section \ref{sec:ordinary}.  Let $D$ be a diagram with choice of an ordering on its $n = n_+ + n_-$ crossings, and a collection of marked points as above.  Assume that the positive crossings are listed first in the chosen ordering, followed by the negative crossings.  We prove the following:

\begin{prop}
\label{prop:identifyingcubes}
There is a homeomorphism $\sigma: \left[ 0,1 \right]^n \rightarrow \left[ 0,1\right]^{n_+}\times \left[-1,0\right]^{n_-}$ sending vertices to vertices and edges to edges, and a collection of isomorphisms 
\[
\left\{\theta_v:\mathbb{V}_v\rightarrow H_2(\Gamma_{\sigma (v)}) \text{ } | \text{ } v \text{ a vertex of } \left[0,1\right]^n\right\},
\]
such that $\psi_{\sigma (e)}\circ \theta_{v}=\theta_{v'}\circ\varphi_e$ whenever $e$ is an edge from $v$ to $v'$. Moreover, the $\theta_v$ can be chosen so that they preserve the homological grading, while sending elements of quantum degree $j$ in $\mathbb{V}_v$ to quantum degree $-j$ in $H_2(\Gamma_{\sigma (v)})$.
\end{prop}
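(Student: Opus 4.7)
The plan is first to identify the two cubes $U$ and $U_2$ combinatorially, then to construct the vertex isomorphisms $\theta_v$ using Lemma \ref{lem:matrixfactorizationstructure}, fix a coherent sign convention, and finally verify edge commutativity and the grading claims from the local computations of Section \ref{sec:computingU2}. I would define the homeomorphism $\sigma$ to be the identity on each positive-crossing coordinate and the shift $t \mapsto t-1$ on each negative-crossing coordinate, so that the oriented resolution in both theories corresponds and the other resolutions also correspond. An immediate check gives $\sum_j \sigma(v)_j = r(v) - n_-$, so the KR homological grading on $\sigma(v)$ matches the ordinary Khovanov homological grading on $v$. Locally at each crossing the $1$-smoothing of ordinary Khovanov and the wide-edge resolution of KR produce the same planar picture after the wide edge is removed, so the circles of $D_v$ are in natural planar bijection $C \leftrightarrow \widetilde{C}$ with the components of $\widetilde{\Gamma}_{\sigma(v)}$.

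Next, for each circle $C$ in each $D_v$, I would pick a preferred marked point $z_C$ lying on the corresponding component $\widetilde{C}$, which via Lemma \ref{lem:matrixfactorizationstructure} identifies $H_2(\Gamma_{\sigma(v)}) \cong \bigotimes_C \mathbb{Q}[z_C]/(z_C^2)$. On each tensor factor I set $\theta_v(1) = 1$ and $\theta_v(x) = \varepsilon_{v,C}\, z_C$ for some sign $\varepsilon_{v,C} \in \{\pm 1\}$ to be determined, and take the tensor product to define $\theta_v:\mathbb{V}_v \to H_2(\Gamma_{\sigma(v)})\{p_{\sigma(v)}\}$. If the signs can be chosen so that $\psi_{\sigma(e)} \circ \theta_v = \theta_{v'} \circ \varphi_e$ on every edge, commutativity holds automatically.

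The main obstacle is pinning down the signs $\varepsilon_{v,C}$ coherently, and this is precisely the ``missing detail'' flagged in the introduction. Two effects interact: different marked points on the same component $\widetilde{C}$ are equal only up to a sign in $H_2$, with the sign determined by the parity of wide-edge cusps traversed between them; and the local edge-map computations of Section \ref{sec:computingU2} express $\widehat{\chi}_0$ and $\widehat{\chi}_1$ in terms of marked points \emph{adjacent to the changing crossing}, which will generally differ from the preferred $z_C$ by just such a parity. My strategy is inductive: fix $\varepsilon_{v_0, C}$ arbitrarily at a base vertex $v_0$ (for instance the all-zero vertex), then propagate along edges of $U$ using the commutativity requirement to force each subsequent $\varepsilon_{v,C}$. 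The nontrivial claim is that this propagation is well-defined, i.e.\ that going around any $2$-face of $U$ produces a consistent sign, and I expect this to be the hardest step. It reduces to a finite case analysis over the possible pairs of adjacent crossings (positive/positive, positive/negative, negative/negative, in each planar configuration), in each of which one compares the cumulative sign accumulated along the two paths around the face.

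Once the signs are coherent, edge commutativity follows directly from Section \ref{sec:computingU2}: the merge-type $\widehat{\chi}_0$ sends $|11\rangle_{G_0} \mapsto |11\rangle_{G_1}$, matching the ordinary merge $1 \otimes 1 \mapsto 1$, and the split-type $\widehat{\chi}_0$ sends $|01\rangle_{J_0} + |10\rangle_{J_0} \mapsto (y+z)|01\rangle_{J_1}$, which under the identification $x \leftrightarrow \pm z_C$ becomes the comultiplication $1 \mapsto 1 \otimes x + x \otimes 1$; analogous checks for $\widehat{\chi}_1$ handle negative crossings. The grading claims are a direct computation: summing the quantum shifts in (\ref{eqn:P+}) and (\ref{eqn:P-}) over all crossings yields $p_{\sigma(v)} = -r(v) - n_+ + 2n_-$, and a short check then shows that an element $x^{\otimes a} \otimes 1^{\otimes (s(v)-a)} \in \mathbb{V}_v$ of $q$-degree $-2a + s(v) + r(v) + n_+ - 2n_-$ maps to an element of $H_2(\Gamma_{\sigma(v)})\{p_{\sigma(v)}\}$ of $q$-degree $2a - s(v) - r(v) - n_+ + 2n_-$, exactly the negative of the original, while the homological grading is preserved by construction of $\sigma$.
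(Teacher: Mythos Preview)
Your outline is structurally sound and lines up with the paper on the definition of $\sigma$, the bijection between circles of $D_v$ and components of $\widetilde{\Gamma}_{\sigma(v)}$, the use of Lemma~\ref{lem:matrixfactorizationstructure}, and the grading computations. The substantive divergence is in how the sign coherence is established, and here your plan is both different from and less complete than the paper's argument.

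You propose to attach a sign $\varepsilon_{v,C}$ to each circle at each vertex, propagate inductively from a base vertex, and then verify consistency by a finite case analysis on $2$-faces. The paper instead separates the problem into two pieces. First, it defines a \emph{global} sign function $\tau:Z\to\{\pm 1\}$ on the set of marked points of $D$ itself, independent of any vertex of the cube: $\tau(z)=(-1)^{\ell(\gamma_z)}$, where $\gamma_z$ is any path in $D$ from $z$ to a fixed basepoint that turns at every crossing, and $\ell(\gamma_z)$ counts orientation-reversing turns. The heart of the argument is a planar-topological lemma showing $\tau(z)$ is independent of the path $\gamma_z$; this is where the Jordan curve theorem enters and where the argument would fail for virtual links. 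Second, with $\tau$ fixed globally, the paper chooses generators $g_v\in H_2(\Gamma_v)$ inductively along edges; well-definedness of the $g_v$ around $2$-faces then follows immediately from commutativity of the $\psi_e$, with no further case analysis needed. The isomorphism is $\theta_v(x_1^{\eta_1}\otimes\cdots\otimes x_s^{\eta_s})=\prod_i(\tau(z_i)z_i)^{\eta_i}\cdot g_{\sigma(v)}$.

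The practical effect is that the paper's global $\tau$ absorbs exactly the sign ambiguity coming from the identification of variables in $H_2(\Gamma_v)$ (where $z_i$ is identified with $\tau(z_i)\tau(z_j)z_j$ on the same component), so the only inductive choice left is the single overall generator $g_v$, whose consistency is automatic. Your scheme folds both issues into the $\varepsilon_{v,C}$ and defers everything to a $2$-face case analysis you do not carry out; moreover, by fixing $\theta_v(1)=1$ you implicitly assume a canonical unit in $H_2(\Gamma_{\sigma(v)})$, but the isomorphism of Lemma~\ref{lem:matrixfactorizationstructure} itself depends on choices and only determines this unit up to sign. Your approach is not wrong in principle, but the missing step is exactly the one the paper singles out as the main content, and the paper's resolution via the path-parity function $\tau$ is both cleaner and what makes the planarity hypothesis visible.
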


We will construct $\sigma$, the isomorphisms $\theta_v$, and prove Proposition \ref{prop:identifyingcubes} in the subsections which follow.  First however, note that Theorem \ref{maintheorem} follows from Proposition \ref{prop:identifyingcubes}.  Indeed, the differential of the ordinary Khovanov complex $\mathscr{C}(D)$ was constructed as a signed sum of the edge maps $\varphi_e$, with signs $\epsilon_e$ chosen subject only to the condition that $d^2=0$.  We can thus set $\epsilon_e = \varepsilon_{\sigma (e)}$ for each edge $e$ of $\left[ 0,1 \right]^n$ (recall that the $\varepsilon_e = \pm1$ are the signs coming from the Leibniz rule in the definition of $\mathscr{C}_2(D)$ in (\ref{eqn:chaincomplex})).  With signs chosen in this way, Proposition \ref{prop:identifyingcubes} then implies that $\mathscr{C}(D) \cong H(\mathscr{C}_2(D),d_{mf})$ as graded chain complexes, and hence Theorem \ref{maintheorem} follows.

\subsection{Identifying the underlying cubes} 
\label{subsec:underlyingcubes} We begin by defining the homeomorphism $\sigma : \left[ 0,1 \right]^n \longrightarrow \left[ 0,1\right]^{n_+}\times \left[-1,0\right]^{n_-}$.  Define $\sigma$ on the vertices $v$ of $\left[0,1\right]^n$ by 
\[
\sigma (v) = \sigma (v_1,\ldots,v_{n_+},v_{n_++1},\ldots,v_n) = (v_1,\ldots,v_{n_+},v_{n_++1}-1,\ldots,v_n-1),
\]
and extend $\sigma$ along the edges to a homeomorphism on the rest of $\left[0,1\right]^n$.

Recall that the $k$-th coordinate of $v \in U$ corresponds to the type of smoothing performed at the $k$-th crossing of $D$ to get $D_v$, while the $k$-th coordinate of $v' \in U_2$ corresponds to the type of resolution performed to the $k$-th crossing to obtain $\Gamma_{v'}$.  Thus positive crossings given 0 or 1-smoothings in $D_v$ are respectively given 0 or 1-resolutions in $\Gamma_{\sigma (v)}$, while negative crossings given 0 or 1-smoothings at $v$ are respectively given $-1$ or 0-resolutions in $\Gamma_{\sigma(v)}$.  

Passing from $D_v$ to $\Gamma_{\sigma (v)}$ can then be done simply by adding a wide edge at each 1-smoothed positive crossing and each 0-smoothed negative crossing, and adding the orientation induced from $D$.  Passing back from $\Gamma_{\sigma (v)}$ to $D_v$ is done by erasing all wide edges and forgetting orientations.  In particular, the components of $D_v$ will correspond to the components of $\widetilde{\Gamma}_{\sigma(v)}$ (which was obtained from $\Gamma_{\sigma(v)}$ by erasing all wide edges).

\subsection{Coherent choices of signs} 
\label{subsec:signs} In defining the $\theta_v$ as above, we must make a consistent choice of generators for the $R_v$-modules $H_2(\Gamma_v)$, which involves first making consistent choices of signs.

To do this, it is convenient for the time being to think of $D$ as an oriented planar graph, with vertices of degree 4 corresponding to crossings, and vertices of degree 2 corresponding to marked points.  In what follows we will consider paths $\gamma$ in this planar graph (which we continue to denote by $D$) which satisfy the following properties:
\begin{enumerate}
\item $\gamma$ starts and ends at 2-valent vertices of $D$,
\item each time $\gamma$ passes a 4-valent vertex of $D$, $\gamma$ switches from the overcrossing strand to the undercrossing strand or vice versa, 
\item \label{assumption:path} except for the start and end points of $\gamma$, each time $\gamma$ visits a 2-valent vertex it approaches and leaves along different edges, and
\item no edge is traversed more than once.
\end{enumerate}
In other words, the graphs we will be interested in start and end at marked points, turn left or right at vertices corresponding to crossings, travel straight through vertices corresponding to marked points, and do not repeat any edges.  

Suppose we fix an orientation on such a path $\gamma$, and suppose that at a 4-valent vertex $c$ the path $\gamma$ approaches along the edge $e$ and leaves along $e'$.  Note that $e$ and $e'$ are equipped with orientations both as edges of the oriented planar graph $D$, and as edges of the orientated path $\gamma$.  For such edges we will refer to the orientation from $D$ as the \emph{graph orientation}, and the orientation from $\gamma$ as the \emph{path orientation}. Now if the graph orientation on exactly one of the edges $e$ or $e'$ agrees with its path orientation, while on the other edge the two orientations disagree, then we say that $\gamma$ takes an \emph{orientation reversing turn} at $c$.  Clearly this does not depend on the orientation chosen on $\gamma$.  As we move along $\gamma$, note that the property of the path and graph orientations either agreeing or disagreeing will change precisely when the path takes an orientation reversing turn. For any such path $\gamma$, let $\ell(\gamma)$ be the number of orientation reversing turns taken by $\gamma$.
  
Note that if $\gamma$ is such a path between marked points $z$ and $z'$, then there is a vertex $v$ of the cube $U_2$ (non-unique in general) so that $\gamma$ gives rise to a path $\widehat{\gamma}$ in an obvious way between $z$ and $z'$ in the resolution $\Gamma_v$.  Each orientation reversing turn taken by $\gamma$ corresponds to a trivalent vertex of $\Gamma_v$ passed by $\widehat{\gamma}$.    
  
Now choose a single marked point on every component of the planar graph $D$ (note that these are \emph{not} in general the components of the underlying link $L$).  Denote these chosen marked points as $w_1, \ldots , w_h$.  Let $Z$ denote the set of marked points of $D$.  

Then we define a map $\tau: Z \rightarrow \{-1,1\}$ by setting $\tau (z) = (-1)^{\ell(\gamma_z)}$, where $\gamma_z$ is any path as above connecting the marked point $z$ to one of the $w_j$. 

\begin{lemma}
$\tau(z)$ depends only on the choice of the marked points $w_j$, and not on the choice of path $\gamma_z$.
\end{lemma}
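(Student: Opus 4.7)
My plan is to extract from $\gamma$ a telescoping invariant that depends only on its endpoints modulo $2$, and then to reduce the independence claim to a short structural check.

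For any valid path $\gamma$, assign each traversed edge $e$ a ``state'' $\sigma_\gamma(e) \in \{0,1\}$: set $\sigma_\gamma(e) = 0$ if $\gamma$ traverses $e$ in the direction of the graph orientation, and $\sigma_\gamma(e) = 1$ otherwise. Condition (\ref{assumption:path}) forces the path to pass straight through any internal $2$-valent vertex, and a direct check shows $\sigma_\gamma$ is unchanged across such a vertex; at any crossing, the definition of an orientation reversing turn is exactly the condition that $\sigma_\gamma$ flips from the incoming to the outgoing edge. Telescoping along $\gamma$ therefore yields
\[
\ell(\gamma) \equiv \sigma_\gamma(e_0) + \sigma_\gamma(e_f) \pmod{2},
\]
where $e_0, e_f$ are the first and last edges of $\gamma$. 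Since $\sigma_\gamma(e_0)$ is determined by which of the two edges at $z$ the path leaves along (and similarly for $\sigma_\gamma(e_f)$ at $w_j$), the parity of $\ell(\gamma)$ depends only on this pair of half-edge choices.

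Given two valid paths $\gamma_1, \gamma_2$ from $z$ to $w_j$, I form the closed walk $\Gamma = \gamma_1 \cdot \overline{\gamma_2}$. Since $w_j$ is $2$-valent, no new orientation reversing turn is introduced at the splice, so $\ell(\Gamma) = \ell(\gamma_1) + \ell(\gamma_2)$. Applying the telescoping identity around $\Gamma$ and carefully accounting for a possible $U$-turn at $w_j$ (when both paths arrive along the same edge) or at $z$ (when they leave along the same edge) collapses the bookkeeping to
\[
\ell(\gamma_1) + \ell(\gamma_2) \equiv \big[e_0^{\gamma_1} = e_0^{\gamma_2}\big] + \big[e_f^{\gamma_1} = e_f^{\gamma_2}\big] \pmod{2}.
\]

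The main obstacle is the remaining structural claim that the right-hand side of this identity is always even, i.e.\ any two valid paths from $z$ to $w_j$ leave $z$ along the same edge if and only if they arrive at $w_j$ along the same edge. I plan to establish this by studying the ``endpoint matching'' between the two half-edges at $z$ and the two at $w_j$ induced by a valid path, and showing that toggling the turn at a single crossing either preserves this matching or swaps it in a globally consistent way; planarity of $D$ is essential here, as it restricts the possible orbit structures arising from the local choices at each crossing. Granting this structural claim, $\ell(\gamma_1) \equiv \ell(\gamma_2) \pmod{2}$, so $\tau(z) = (-1)^{\ell(\gamma_z)}$ is well-defined.
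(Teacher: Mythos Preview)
Your telescoping identity $\ell(\gamma) \equiv \sigma_\gamma(e_0) + \sigma_\gamma(e_f) \pmod 2$ is correct and in fact streamlines what the paper does in its Cases~1 and~2: since each 2-valent endpoint has exactly one incoming and one outgoing edge in the graph orientation, $\sigma_\gamma(e_0)$ is determined solely by which half-edge at $z$ the path uses, and similarly at $w_j$. Summing the identity for $\gamma_1$ and $\gamma_2$ already yields your displayed congruence directly, so the detour through the closed walk $\Gamma = \gamma_1 \cdot \overline{\gamma_2}$ is unnecessary (and mildly problematic, since $\gamma_1$ and $\gamma_2$ may share edges and the concatenation need not satisfy condition~(4)).

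The genuine gap is your ``structural claim.'' You correctly isolate it as the crux and correctly note that planarity is essential, but the sketch you offer---``toggling the turn at a single crossing either preserves this matching or swaps it in a globally consistent way''---is not an argument. There is no evident induction on crossings here: toggling a single turn generally reroutes the entire path, not just a local piece, so the effect on the endpoint matching is global and opaque. The paper proves exactly this claim (its Case~3) by a direct planarity argument: assuming $\gamma$ and $\gamma'$ share their initial edge but not their final edge, one first truncates to arrange that after diverging at some 4-valent vertex $c$ the two paths are disjoint until they terminate; then, after locally resolving self-intersections, the region bounded by the simple closed curve $\gamma \cup \gamma'$ carries a planar graph with exactly one odd-degree vertex (namely $c$), contradicting the handshake lemma. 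You would need to supply an argument of comparable force---your vague plan does not yet do so.
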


\begin{proof}
First note there since there is only one $w_j$ on each component, the endpoints of any such path $\gamma_z$ are fixed.  Let $\gamma$ and $\gamma'$ be any two such paths connecting $z$ and the point $w_j$.  Orient both $\gamma$ and $\gamma'$ from $z$ to $w_j$.  The proof proceeds by considering three possible cases.

\emph{Case 1}. Suppose first that $\gamma$ and $\gamma'$ share both the same initial and final edges, which edges we label $e_0$ and $e_1$.  Then the path orientations induced on $e_0$ by $\gamma$ and $\gamma'$ must either both agree with or disagree with the graph orientation on $e_0$, and similarly with $e_1$.  Because the agreement or disagreement of path and graph orientations switches precisely at orientation reversing turns, this implies that $\ell(\gamma) \equiv \ell(\gamma') \text{ mod }2$. 

\emph{Case 2}. Suppose instead that both the initial and final edges of $\gamma$ and $\gamma'$ are different.  Then a similar argument again proves that $\ell(\gamma) \equiv \ell(\gamma') \text{ mod }2$. 

\emph{Case 3}.  Finally suppose that $\gamma$ and $\gamma'$ share either the same initial edges or final edges, but not both.  Assume for concreteness that they both have initial edge $e_0$.  

Since $\gamma$ and $\gamma'$ share the same initial edge, the two paths will agree until they diverge at some 4-valent vertex $c$, one path turning left while the other turns right.  Assume first that after diverging at $c$ the paths remain disjoint until they meet again at their common endpoint $w_j$.  Note however that $\gamma$ and $\gamma'$ may still have self-intersections of the form shown in the left-hand-side of Figure \ref{fig:localgraphmod} (where $\gamma$ may be replaced with $\gamma'$, and other choices of path orientations may be substituted). 

Now in a neighborhood of each of these self-intersections we make a local modification to $D$ as shown in Figure \ref{fig:localgraphmod}, and call the resulting graph $\widetilde{D}$.  Note that $\gamma$ and $\gamma'$ give rise to paths $\widetilde{\gamma}$ and $\widetilde{\gamma}'$ in $\widetilde{D}$ in the obvious way.
\begin{figure}
 \centering
 \includegraphics[width=11cm]{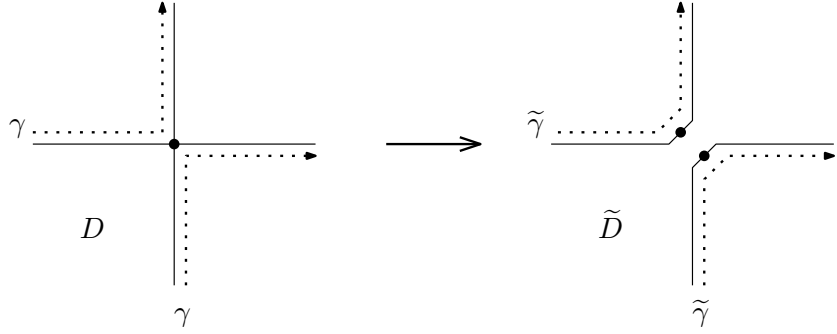}
  \caption{Self-intersections of $\gamma$ and $\gamma'$ with local resolution}
  \label{fig:localgraphmod}
\end{figure}
Consider the simple closed curve $\alpha \in \mathbb{R}^2$ obtained by starting at $c$, traveling along $\widetilde{\gamma}$ to $w_j$, and then returning to $c$ along $\widetilde{\gamma}'$, following its reverse orientation.  Let $A$ denote the compact region of $\mathbb{R}^2$ bounded by $\alpha$, and let $D'=\widetilde{D} \cap A$ be the graph consisting of all vertices and edges which lie entirely in $A$.

The vertex $c$ of $D$ will give rise to a 4-valent vertex of $\widetilde{D}$, which in turn yields a 3-valent vertex when we restrict to $D'$.  The other vertices of $\widetilde{D}$ along $\alpha$ will give either 2 or 4-valent vertices of $D'$.  Finally, all of the vertices of $\widetilde{D}$ lying on the interior of $A$ will have valence either 2 or 4, which will not change when we restrict to $A$.  Thus $D'$ is a graph with precisely one odd degree vertex, a contradiction. 

Now consider the general case where $\gamma$ and $\gamma'$ meet after they diverge at $c$ but before they reach $w_j$.  Suppose first that sometime after diverging the two paths again share an edge $e_2$ on which they induce the same path orientation.  Then by ignoring all of the edges after $e_2$ in $\gamma$ and $\gamma'$ and applying Case 1 shows that the number of orientation reversing turns taken by these two subpaths must agree mod 2.  Thus we can ignore any such loops, and assume that after diverging at $c$ the paths do not share an edge on which they induce the same path orientation.

Assume instead that after diverging the two paths share an edge $e_2$ on which they induce different path orientations.  Also assume that $e_2$ is the first such edge on $\gamma$ with this property.  By assumption (\ref{assumption:path}) on our paths $\gamma$ and $\gamma'$, they also must share at least one additional edge $e_3$ on which they induce different path orientations, which can be chosen so that it shares a 2-valent vertex with $e_2$.  This 2-valent vertex corresponds to a marked point, which we call $w$.  Following the orientation of $\gamma$ locally, we cross $e_2$ first to $w$, then cross the edge $e_3$.  Following $\gamma'$ instead reverses the order of $e_2$ and $e_3$.  By discarding the edges of $\gamma$ which come after $e_2$ and the edges of $\gamma'$ which come after $e_3$, we obtain subpaths of $\gamma$ and $\gamma'$ respectively from $z$ to $w$ which do not share any edges after they diverge at the vertex $c$.  Furthermore they still share the same initial edge $e_0$ but have different final edges $e_2$ and $e_3$.  We can thus assume that our paths $\gamma$ and $\gamma'$ are of this form.

Starting at $c$ and following $\gamma$ along its orientation, let $v$ be the first point of $\gamma$ which intersects $\gamma'$.  Since $\gamma$ and $\gamma'$ do not share any edges after the paths diverge at $c$, this intersection will necessarily look like one of the local pictures in Figure \ref{fig:localgraphintersection} (possibly after applying reflections and rotations).  Let $\beta$ be the path obtained by following $\gamma$ from $c$ to $v$, and then $\gamma'$ from $v$ back to $c$ along its reverse orientation.  
\begin{figure}
 \centering
 \includegraphics[width=10cm]{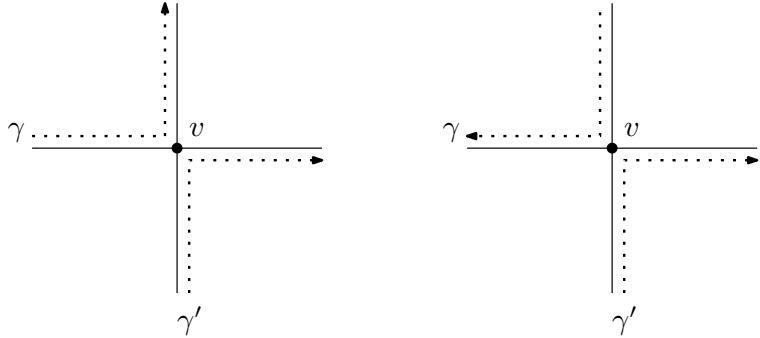}
  \caption{Two possible intersections types between $\gamma$ and $\gamma'$}
  \label{fig:localgraphintersection}
\end{figure}

Suppose first that the intersection at $v$ looks locally like the left-hand-side of Figure \ref{fig:localgraphintersection}.  By applying the modifications of Figure \ref{fig:localgraphmod} along $\beta$, we can again construct a counterexample as above. 

Thus we assume instead then that the intersection at $v$ looks locally like the right-hand-side of Figure \ref{fig:localgraphintersection}.  In this case the path $\beta$ necessarily separates the two arrow heads in this picture.  Since both paths end at $w$, and since the only possible intersections and self-intersections are as in Figures \ref{fig:localgraphmod} and \ref{fig:localgraphintersection}, this yields the last required contradiction and proves the lemma.
\end{proof}

Now suppose that a choice of $w_1, \ldots , w_h$ has been fixed for the marked diagram $D$, so that $\tau(z)$ is defined for all marked points $z$.  Let $v$ be any vertex of the cube $U_2$ of resolutions, with associated matrix factorization homology $H_2(\Gamma_v)$.  Recall that this homology was computed in Lemma \ref{lem:matrixfactorizationstructure}, where the matrix factorization $C_2(\Gamma_v)$ was originally considered as a module over the ring $R = \mathbb{Q}\left[z_1,\ldots,z_m\right]$.  As we eliminated the rows of $C_2(\Gamma_v)$, we made identifications of these $z_i$ until we were left with a matrix factorization over the ring $R_v$.  Note that if $z_i$ is a marked point on the same component of $\widetilde{\Gamma}_v$ as $z_j$ (recall that $\widetilde{\Gamma}_v$ is the result of removing all wide edges from $\Gamma_v$), then it is easy to verify that $z_i$ will be identified with $\tau(z_i)\tau(z_j)z_j$ in passing to this new ground ring.

\subsection{Choosing generators of $H_2(\Gamma_v)$}
\label{subsec:generators} The final remaining step before defining the vertex isomorphisms is that we must make choose generators of $H_2(\Gamma_v)$ for each vertex $v$ of $U_2$.  To do this, start with the vertex $v_0 = \{0,\ldots,0,-1,\ldots,-1\}$, where the first $n_+$ coordinates are 0 and the remaining $n_-$ are $-1$.  Let $g_{v_0}$ denote some choice of generator of $H_2(\Gamma_{v_0})$ as a $R_{v_0}$-module.  

We define the generators at the remaining vertices inductively.  Suppose that $v$ and $v'$ are two vertices, that $v'$ is obtained from $v$ by increasing one of its coordinates by $+1$, and that a generator $g_v$ has already been chosen for $H_2(\Gamma_v)$.  Let $e$ be the edge joining $v$ and $v'$.   

Suppose first that $e$ corresponds either to the left-to-right modification in Figure \ref{fig:edgemapsnew}, or the right-to-left modification in Figure \ref{fig:edgemaps1}.  Then we define $g_{v'} = \psi_e (g_v)$.  If, on the other hand, $e$ corresponds to either the right-to-left modification in Figure \ref{fig:edgemapsnew}, or the left-to-right modification in Figure \ref{fig:edgemaps1}, then we define
\[
g_{v'} = \dfrac{\psi_e (g_v)}{\tau(y)y+\tau(z)z}.
\]  
By our computations in Section \ref{sec:computingedgemaps} we see that $g_{v'}$ generates $H_2(\Gamma_{v'})$ as an $R_{v'}$-module (here we use the fact that in Figure \ref{fig:edgemapsnew} we have $\tau(z)=-\tau(y)$, whereas in Figure \ref{fig:edgemaps1} we have $\tau(z)=\tau(y)$).  It is straightforward to check that these $g_v \in H_2(\Gamma_v)$ are well-defined since the edge maps $\psi_e$ commute around the faces of $U_2$.

\subsection{The isomorphisms $\theta_v$}
\label{subsec:isomorphisms}  We are finally in a position to define the isomorphisms $\theta_v : \mathbb{V}_v \rightarrow H_2(\Gamma_{\sigma(v)})$.  Let $v$ be a vertex of $U$, and suppose that we have chosen an ordering on the $s = s(v)$ components of the total smoothing $D_v$.  Suppose also that we have labeled the marked points so that $z_j$ lies on the $j$th component of $D_v$ for $j=1,\ldots,s$.  Recall that $\mathbb{V}_v$ is the tensor product of $s$ copies of the 2-dimensional vector space generated by $\{1,x\}$, each one corresponding to a component of $D_v$.  In the factor corresponding to the $j$th component, we will denote the generator 1 by $1_j = x^0_j$ and the generator $x$ by $x_j = x^1_j$.  Then $\mathbb{V}_v$ has a $\mathbb{Q}$-basis 
\[
\{ x_1^{\eta_1} \otimes \cdots \otimes x_s^{\eta_s} \text{ }| \text{ } \eta_1,\ldots ,\eta_s \in \{0,1\} \},
\]
while as a $\mathbb{Q}$-vector space $H_2(\Gamma_{\sigma(v)}) \cong \mathbb{Q}[z_1,\ldots,z_s]/(z_1^2,\ldots,z_s^2)$ has basis
\[
\{ z_1^{\mu_1}\cdots z_s^{\mu_s} \text{ } | \text{ } \mu_1, \ldots ,\mu_s \in \{0,1\}\}.
\]
We then define the $\mathbb{Q}$-vector space isomorphism $\theta_v$ by
\[
\theta_v ( x_1^{\eta_1} \otimes \cdots \otimes x_s^{\eta_s} ) = \left(\prod_{i=1}^s (\tau(z_i)z_i)^{\eta_i}\right) g_{\sigma(v)}.
\] 

It is easy to verify that these isomorphisms commute with the edge maps.  For example, suppose that $v$ and $v'$ are vertices of $U$ joined by an edge $e$, which corresponds to the splitting of one component in $D_v$ into two.  For concreteness assume the that $s$th component of $D_v$ splits to create the $s$th and $(s+1)$th components of $D_{v'}$.  Then
\begin{align*}
\theta_{v'}\circ\varphi_e\left(x_1^{\eta_1} \otimes \cdots \otimes x^{\eta_{s-1}}_{s-1} \otimes 1_s\right) & = \theta_{v'}\left(x_1^{\eta_1} \otimes  \cdots \otimes x^{\eta_{s-1}}_{s-1} \otimes x_s \otimes 1_{s+1}\right) \\
&\qquad \qquad + \theta_{v'}\left(x_1^{\eta_1} \otimes \cdots \otimes x^{\eta_{s-1}}_{s-1} \otimes 1_s \otimes x_{s+1}\right) \\
& = \left(\prod_{i=1}^{s-1} (\tau(z_i)z_i)^{\eta_i}\right) \left( \tau(z_s)z_s+\tau(z_{s+1})z_{s+1} \right)g_{\sigma(v')},
\end{align*}   
while
\begin{align*}
\psi_{\sigma(e)}\circ\theta_v\left(x_1^{\eta_1} \otimes \cdots \otimes x^{\eta_{s-1}}_{s-1} \otimes 1_s\right) & = \psi_{\sigma(e)}\left( \left(\prod_{i=1}^{s-1} (\tau(z_i)z_i)^{\eta_i}\right) g_{\sigma(v)}\right)\\
& =\left(\prod_{i=1}^{s-1} (\tau(z_i)z_i)^{\eta_i}\right) \psi_{\sigma(e)}\left(g_{\sigma(v)}\right)\\
& =\left(\prod_{i=1}^{s-1} (\tau(z_i)z_i)^{\eta_i}\right)\left( \tau(z_s)z_s + \tau(z_{s+1})z_{s+1}\right)g_{\sigma(v')}.
\end{align*}
Likewise we have 
\begin{align*}
\theta_{v'}\circ\varphi_e\left(x_1^{\eta_1} \otimes \cdots \otimes x^{\eta_{s-1}}_{s-1} \otimes x_s\right) & = \theta_{v'}\left(x_1^{\eta_1} \otimes \cdots \otimes x^{\eta_{s-1}}_{s-1} \otimes x_s \otimes x_{s+1}\right) \\
& = \left(\prod_{i=1}^{s-1} (\tau(z_i)z_i)^{\eta_i}\right)  \tau(z_s)\tau(z_{s+1})z_sz_{s+1}g_{\sigma(v')},
\end{align*}   
while
\begin{align*}
\psi_{\sigma(e)}\circ\theta_v\left(x_1^{\eta_1} \otimes \cdots \otimes x^{\eta_{s-1}}_{s-1} \otimes x_s\right)& = \psi_{\sigma(e)}\left( \left(\prod_{i=1}^{s-1} (\tau(z_i)z_i)^{\eta_i}\right) \tau(z_s)z_sg_{\sigma(v)}\right)\\
& =\left(\prod_{i=1}^{s-1} (\tau(z_i)z_i)^{\eta_i}\right) \tau(z_s)z_s\psi_{\sigma(e)}\left(g_{\sigma(v)}\right)\\
& =\left(\prod_{i=1}^{s-1} (\tau(z_i)z_i)^{\eta_i}\right)\left( \tau(z_s)^2z^2_s+\tau(z_s)\tau(z_{s+1})z_sz_{s+1}\right)g_{\sigma(v')}\\
& =\left(\prod_{i=1}^{s-1} (\tau(z_i)z_i)^{\eta_i}\right)\tau(z_s)\tau(z_{s+1})z_sz_{s+1}g_{\sigma(v')},
\end{align*}
where for the last equality we used the fact that $z^2_s = 0$ in $H_2(\Gamma_{\sigma(v')})$.  The case of two circles of $D_v$ merging into one circle of $D_{v'}$ is similar, and is left to the reader.

\subsection{Homological gradings}
\label{subsec:homologicalgradings} It remains only to verify that the isomorphisms $\theta_v$ preserve the homological grading and change the quantum gradings by a sign.  Since the homological grading of any element in either setup depends only on the the vertex at which it lives, we can easily check that both $v$ and $\sigma (v)$ will have the same homological degrees (more precisely, the homological degree of any $y \in \mathbb{V}_v$ will equal the homological degree of any $y' \in H_2 (\Gamma_{\sigma (v)})$).  In the ordinary cube $U$, the homological degree at a vertex $v = (v_1, \ldots , v_n)$ is given by 
\begin{equation}
\label{eqn:homologicaldegree1}
r(v) - n_- = \sum_{i=1}^n v_i - n_-,
\end{equation} 
while at vertex $\sigma(v) = (v'_1, \ldots , v'_n)$ in the cube $U_2$, the homological degree is
\begin{equation}
\label{eqn:homologicaldegree2}
\sum_{i=1}^n v'_i = \sum_{i=1}^{n_+} v_i + \sum_{i=n_++1}^{n} (v_i-1) = \sum_{i=1}^n v_i - n_-.
\end{equation}

\subsection{Quantum gradings}
\label{subsec:quantumgradings} Recall now that each vertex $v'$ of the cube $\left[0,1\right]^{n_+} \times \left[-1,0\right]^{n_-}$ has an additional $q$-grading shift coming from the definitions of $\mathscr{C}_2(P_+)$ and $\mathscr{C}_2(P_-)$ in (\ref{eqn:P+}) and (\ref{eqn:P-}).  For a vertex $v' = (v'_1, \ldots , v'_n)$ the $q$-grading of the space $H_2(\Gamma_{v'})$ is additionally shifted by 
\begin{equation}
\label{eqn:gradingshift}
p_{v'}=\sum_{i=1}^{n_+} -(v'_i + 1) + \sum_{i=n_++1}^n (1-v'_i) = -\sum_{i=1}^n v'_i + n_--n_+.
\end{equation}
Now if $v=\sigma^{-1}(v')$, then the number of circles obtained from deleting all the wide edges of $\Gamma_{v'}$ will be equal to the number of components of $D_v$, namely $s(v)$.  Combining (\ref{eqn:homologicaldegree1}), (\ref{eqn:homologicaldegree2}), (\ref{eqn:gradingshift}), and Lemma \ref{lem:matrixfactorizationstructure} then gives  
\[
H_2 (\Gamma_{v'})\{p_{v'}\} \cong \left(\mathbb{Q} \left[z_1, \dots , z_k\right]/(z_1^2, \ldots , z_k^2)\right) \{-s(v)-r(v) -n_+ + 2n_-\}.
\]

Consider the element $x_1^{\eta_1} \otimes \cdots \otimes x_s^{\eta_s} \in \mathbb{V}_v$.  Recall that $\text{deg }1=1$ while $\text{deg }x=-1$, and that there is an additional degree shift of $r(v)+n_+-2n_-$ in the definition of $\mathbb{V}_v$.  Then 
\[
\text{deg }x_1^{\eta_1} \otimes \cdots \otimes x_s^{\eta_s} = s(v)-2\sum_{i=1}^s\eta_i+r(v)+n_+-2n_-.
\]
But $\text{deg }g_{\sigma(v)} = -s(v)-r(v) -n_+ + 2n_-$, while $\text{deg }z_j = 2$ for each $z_j$.  This shows that 
\[
\text{deg }\theta_v \left(x_1^{\eta_1} \otimes \cdots \otimes x_s^{\eta_s}\right)= -s(v)+2\sum_{i=1}^s \eta_i -r(v) -n_+ + 2n_-,
\]
which completes the proofs of Proposition \ref{prop:identifyingcubes} and Theorem \ref{maintheorem}.

\nocite{WuKR}
\bibliographystyle{amsplain}
\bibliography{bibliography}

\end{document}